\newcounter{contador}
\newcounter{contador2}
\newtheorem{theorem}[contador]{Theorem}
\newtheorem{lemma}[contador]{Lemma}
\newcommand{\C}{{\mathbb C}}
\newcommand{\N}{{\mathbb N}}
\newcommand{\pr}{{P}{\mathbb C^2}}
\newcommand{\pru}{{P}{\mathbb C^1}}
\newcommand{\Z}{{\mathbb Z}}
\newcommand{\homog}{[x_0:x_1:x_2]}
\title{Dynamical Classification of a Family of Birational Maps of $\C^2$ via Algebraic Entropy\footnote{{\bf Acknowledgements}.
The first author is supported by Ministry of Economy, Industry and Competitiveness of the Spanish Government through grants MINECO/FEDER MTM2016-$77278$ and also supported by the grant $2014$-SGR-$568$ from AGAUR, Generalitat de Catalunya. The GSD-UAB Group is supported by the Government of Catalonia through the SGR program. It is also supported by MCYT through the grant MTM$2008-03437.$}}
\author{Anna Cima$^{(1)}$ and Sundus Zafar$^{(1)}$
  \\*[.1truecm]
{\small \textsl{$^{(1)}$ Dept. de Matem\`{a}tiques, Facultat de
Ci\`{e}ncies,}}
\\*[-.25truecm] {\small \textsl{Universitat Aut\`{o}noma de Barcelona,}}
\\*[-.25truecm] {\small \textsl{08193 Bellaterra, Barcelona, Spain}}
\\*[-.25truecm] {\small \textsl{cima@mat.uab.cat,
sundus@mat.uab.cat}}}
\date{}
\begin{document}
\maketitle



\begin{abstract}
This work dynamically classifies a $9-$parametric family of
birational maps $f: \C^2 \to \C^2.$  From the sequence of the
degrees $d_n$ of the iterates of $f,$ we find the dynamical degree
$\delta(f)$ of $f$. We identify when $d_n$ grows periodically,
linearly, quadratically or exponentially. The considered family
includes the birational maps studied by Bedford and Kim in
\cite{BK2} as one of its subfamilies.
\end{abstract}


\noindent {\sl  Mathematics Subject Classification 2010:} 14E05,
26C15, 34K19, 37B40, 37C15, 39A23, 39A45.

\noindent {\sl Keywords:} Birational maps, Algebraic entropy, First
Integrals, Fibrations, Blowing-up, Integrability, Periodicity, Chaos.

\section{Introduction}

In this work we
consider the family of fractional maps $f:\C^2 \to \C^2$ of the
form:
\begin{equation}
\label{eq1}
 f(x,y) = \left( {\alpha _0} + {\alpha _1}x + {\alpha
_2}y,\frac{{\beta _0} + {\beta _1}x + {\beta _2}y}{{\gamma _0} +
{\gamma _1}x + {\gamma _2}y} \right),
\end{equation}
where the parameters are complex numbers.

This family of maps can be extended to the projective plane $\pr$ by
considering the embedding $(x_1,x_2)\in\C^2 \mapsto [1:x_1:x_2] \in
{\pr}$ into projective space. The induced map $F:{\pr} \to {\pr}$
has three components $F_i[x_0:x_1:x_2]\,,\,i=1,2,3$ which are
homogeneous polynomials of degree two. For general values of the
parameters the three components don't have a common factor: we say
that these maps have degree two. Similarly we can define the degree
of ${F^n} = F \circ \cdot \cdot \cdot  \circ F$ for each $n\in\N.$
It can be seen that if $f(x_1,x_2)$ is a birational map, then the
sequence of its degrees satisfies a homogeneous linear recurrence
with constant coefficients (see \cite{DF} for instance or Section
3). This is governed by the characteristic polynomial
$\mathcal{X}(x)$of a certain matrix associated to $F.$ The other
information we get from $\mathcal{X}(x)$ is the \textit{dynamical
degree} $\delta(F)$ which is it's largest real root, and is defined
as
\begin{equation}\label{eq.1}
\delta(F): = \mathop {\lim }\limits_{n \to \infty } {\left( {\deg
({F^n})} \right)^{\frac{1}{n}}},
\end{equation}
see \cite{BK1,BK2,BK3,BV,JD,DF}. The logarithm of this quantity has
been called the \textit{algebraic entropy}.

The application of algebraic entropy in the field of dynamical
systems has been growing in recent years, see for instance
\cite{BK1,BK2,BK3,BV,BD,BC,CZ,JD,DF}. On the other hand, the study
of the dynamics generated by birational mappings in the plane is
also a current issue, see for instance \cite{ADMV,BK1,BK2,BK3,JD}
also \cite{CGM2,CGM3,DS,LG,LKP,LGK,LKMR,Ro1,PRo,ZE}.

It is known (see \cite{Yom}) that the algebraic entropy is an upper
bound of the \textit{topological entropy}, which in turn is a
dynamic measure of the complexity of the mapping.

The algebraic entropy for the maps (\ref{eq1}) highly depends on the
choice of parameters. For this reason our study includes all the
possible values of the parameters of $f$ to determine the growth
rate $d_n$ and $\delta(F).$ Therefore the results that we get can be
seen as a dynamical classification of family (\ref{eq1}).
Furthermore, they generalize the results obtained in \cite{BK2},
which the authors consider the subfamily of (\ref{eq1})with ${\alpha
_0}=0,\,{\alpha _1}=0$ and ${\alpha_2}=1.$

Birational mappings $F:{\pr} \to {\pr}$ have an indeterminacy set
${\mathcal{I}(F)}$ of points where $F$ is ill-defined as a
continuous map. This set is given by:
$${\mathcal{I}(F)}=\{\homog\in\pr:F_1\homog=0,F_2\homog=0,F_3\homog=0]\}.$$


On the other hand, if we consider one irreducible component $V$ of
the determinant of the Jacobian of $F$, it is known (see Proposition
$3.3$ in \cite{JD}) that $F(V)$ reduces to a point in
$\mathcal{I}(F^{-1})$. The set of these curves which are sent to a
single point is called the {\it exceptional locus} of $F$ and it is
denoted by $\mathcal{E}(F).$

It is known that the dynamical degree depends on the orbits of the
indeterminacy points of the inverse of $F$ under the action of $F,$
see \cite{DF, FS, SZ}. Indeed, the key point is whether the iterates
of such points coincide with any of the indeterminacy points of $F.$

Generically our map $F$ has three indeterminacy points. The
exceptional locus is formed by three straight lines, each two of
them intersecting on a single indeterminate point of $F$. We call
them \textit{non degenerate mappings}. But there is a subfamily such
that the exceptional locus is formed by only two straight lines. We
call these mappings {\it degenerate mappings} and they are studied
in the paper \cite{CZ1}. Hence we are not going to consider them
here.

To find $\delta(F)$ and $d_n$ and to study its behaviour, we use a
Theorem of Bedford and Kim (see \cite{BK1}) to find the
characteristic polynomial which provides $d_n$.

The results obtained in this paper are the starting point for
studying the dynamic properties of the elements of family
(\ref{eq1}). We can expect certain types of behaviors, for instance,
if we want to know the mappings which are globally periodic, we have
to look at the ones whose sequence of degrees is periodic. To find
the mappings that are integrable (i. e., mappings that preserve the
level curves of some rational function) or mappings that preserve
some fibrations, we have to look for the mappings whose sequence of
degrees $d_n$ grows linearly or quadratically in $n,$ see \cite{DF}.
We can encounter chaos whenever $d_n$ grows exponentially. As a
continuation of this work, in the following articles, "Zero entropy
for some birational maps of $\C^2$", see \cite{CZ0}, and
"Finding invariant fibrations for some birational maps of $\C^2$",
see \cite{CZ1}, we give all the maps of type (\ref{eq1}) which have
zero entropy in the particular cases $\gamma_1=0$ in the first one
and for the degenerate cases in the second one, giving explicitly
the invariants when they exist.

The details of prerequisites and results of this work can be found
in \cite{CZ2}.

The article is organized as follows: The main results are announced
in Section $2.$ The preliminary results which include the basic
settings of the work, some background of birational maps and Picard
group and the structure of the orbits'lists are introduced in
Section $3.$ In Section $4$ we give the proof of the results.
Finally in Section $5$ we present the rest of the zero entropy
mappings which are not included in the two mentioned papers
\cite{CZ0} and \cite{CZ1}.

\section{Main results}

The results that we find are presented in the following theorems
$1,2,3,4$ and $5$. In all of them we consider that the coefficients
of the map $f$ are such that $f$ is a birational map and $F$ has
degree two (see Lemma \ref{conditions}).

Consider the family of fractional maps $f:\C^2 \to \C^2$ :
$$
 f(x,y) = \left( {\alpha _0} + {\alpha _1}x + {\alpha
_2}y,\frac{{\beta _0} + {\beta _1}x + {\beta _2}y}{{\gamma _0} +
{\gamma _1}x + {\gamma _2}y} \right),
$$
where the parameters are complex numbers. We call
\begin{equation}
\label{EQ1} F[{x_0}:{x_1}:{x_2}]
=[F_1\homog:F_2\homog:F_3\homog],\end{equation} the extension of
$f(x,y)$ to the projective plane, where
$$\begin{array}{l}
F_1\homog=x_0({\gamma _0}x_0 + {\gamma _1}x_1 +
{\gamma _2}x_2),\\
F_2\homog=({\alpha _0}x_0 + {\alpha _1}x_1 + {\alpha _2}x_2)({\gamma
_0}x_0 + {\gamma _1}x_1 + {\gamma _2}x_2)
,\\
F_3\homog=x_0({\beta _0}x_0 + {\beta _1}x_1 + {\beta _2}x_2).
\end{array}$$

The indeterminacy set of  $F[x_0:x_1:x_2]$ is
${\mathcal{I}(F)}\,=\,\{O_0,O_1,O_2\},$ with
\begin{equation}\label{Os}
O_0=[(\beta\gamma)_{12}:(\beta\gamma)_{20}:(\beta\gamma)_{01}],\quad
O_1=[0:\alpha_2:-\alpha_1],\quad O_2=[0:\gamma_2:-\gamma_1],
\end{equation} where
$(\beta\gamma)_{ij}:=\beta_i\gamma_j-\beta_j\gamma_i$ for $i,j\in
\{0,1,2\}.$

By calling $f^{-1}(x,y)$ the inverse of $f(x,y)$ and by
$F^{-1}\homog$ its extension on $\pr,$ also a indeterminacy set
${\mathcal{I}(F^{-1})}$ exists:
${\mathcal{I}(F^{-1})}\,=\,\{A_1,A_2,A_3\},$ with
$$A_0=[0:1:0]\quad , \quad A_1=[0:0:1],$$
\begin{equation}\label{As} A_2=[(\beta\gamma)_{12}\,(\alpha\gamma)_{12}:(\alpha_0\,(\beta\gamma)_{12}-\alpha_1\,(\beta\gamma)_{02}+
\alpha_2\,(\beta\gamma)_{01})\,(\alpha\gamma)_{12}:(\alpha\beta)_{12}\,(\beta\gamma)_{12}].
\end{equation}

We denote by $\delta^{*} = \frac{1+\sqrt{5}}{2}$  the golden mean,
which is the largest root of the polynomial $x^2-x-1.$

\begin{theorem}\label{theo1}
Let $F:\pr \to \pr$ be a birational degree two non degenerate map of
type (\ref{eq1}) and suppose that $\alpha_1, \alpha_2, \gamma_1,
\gamma_2$ are all non zero. Then either,
\begin{itemize}
\item [(i)] If it exists $p\in\N$ such that ${F^p}({A_2}) = {O_0},$ then the
characteristic polynomial associated with $F$ is
 $$\mathcal{W}_p = {x^{p + 2}} - 2{x^{p + 1}} + x - 1,$$  $\delta(F)$ is given by the largest root of the
polynomial $\mathcal{W}_p$ and $d_n$ grows quadratically.
 \item [(ii)] If
no such $p$ exists then $\delta(F) = 2$ and $d_n$ grows
quadratically.
\end{itemize}
\end{theorem}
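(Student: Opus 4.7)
The plan is to apply the theorem of Bedford and Kim (referenced in the excerpt) that reduces the computation of the characteristic polynomial governing $d_n$ and the dynamical degree $\delta(F)$ to the combinatorial data describing how the orbits under $F$ of the points in $\mathcal{I}(F^{-1})=\{A_0,A_1,A_2\}$ meet the set $\mathcal{I}(F)=\{O_0,O_1,O_2\}$. The first step is to analyze the exceptional locus $\mathcal{E}(F)$: under the non-degeneracy hypothesis and $\alpha_1,\alpha_2,\gamma_1,\gamma_2\neq 0$, direct factorization of $\det(JF)$ shows that $\mathcal{E}(F)$ consists of three distinct lines, the line $\{x_0=0\}$, the line $\{\alpha_0x_0+\alpha_1x_1+\alpha_2x_2=0\}$ and the line $\{\gamma_0x_0+\gamma_1x_1+\gamma_2x_2=0\}$, each contracted to a specific $A_i$, and pairwise meeting at the three points $O_0,O_1,O_2$. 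This fixes the contraction pattern $L_i \to A_i$.

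Second, I would compute the initial orbits of $A_0=[0:1:0]$ and $A_1=[0:0:1]$ directly from the formulas in (\ref{EQ1}). Under the standing non-vanishing hypothesis, these two orbits are short and enter $\mathcal{I}(F)$ after a bounded number of steps which is the same for all parameters in the non-degenerate range; they contribute only factors of the form $(x-1)^{k}$ (or other trivial cyclotomic pieces) to the spectrum of $F_{\ast}$ on the Picard group. The parameter-dependent part of the orbit data is therefore concentrated entirely on $A_2$.

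For case (i), assuming $F^p(A_2)=O_0$, I would construct the rational surface $\pi\colon X\to\pr$ obtained by blowing up all points of the three finite orbits: the short orbits of $A_0$ and $A_1$ and the orbit $\{A_2,F(A_2),\dots,F^{p}(A_2)=O_0\}$. On $X$ the lift $\widetilde F$ is a (pseudo-)automorphism, so by the Bedford–Kim theorem the sequence $d_n$ satisfies the linear recurrence with characteristic polynomial equal to the characteristic polynomial of $\widetilde F^{\ast}$ acting on $\mathrm{Pic}(X)$. I would write this matrix in the basis consisting of the pullback of the hyperplane class together with the exceptional divisors above each blown-up point, using the basic rules $F^{\ast}H=2H-\sum E_{A_i}$ and $\widetilde F^{\ast}E_{F(q)}=E_{q}$ along orbits. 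The characteristic polynomial will factor as the product of a trivial piece (coming from the short orbits of $A_0$ and $A_1$) and the polynomial $\mathcal{W}_p(x)=x^{p+2}-2x^{p+1}+x-1$; the claim that the largest root of $\mathcal{W}_p$ is $\delta(F)$ then follows because $\mathcal{W}_p(1)=-1<0$ while $\mathcal{W}_p(2)=1>0$, placing the spectral radius in the interval $(1,2)$.

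For case (ii), the orbit of $A_2$ never meets $\mathcal{I}(F)$, so no finite iterated blowup regularizes $F$ along this orbit; the matrix model of Bedford–Kim would have infinite rank in the $A_2$ direction, but the quotient by the tail of the orbit yields $\delta(F)$ equal to the generic degree, namely $\deg F=2$. I would justify this by showing that the only cancellation that can lower $d_n$ below $2^n$ is precisely a coincidence of the form $F^{k}(A_2)\in\mathcal{I}(F)$, which is excluded by hypothesis. The main obstacle is the explicit computation in the previous paragraph: setting up the matrix of $\widetilde F^{\ast}$ on $\mathrm{Pic}(X)$ correctly, keeping track of the proper transforms of the three exceptional lines and the images of the exceptional divisors above each $O_j$, and verifying that the characteristic polynomial collapses to $\mathcal{W}_p$ after the expected cyclotomic factors drop out. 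This is where all of the geometric bookkeeping of the blowup is concentrated.
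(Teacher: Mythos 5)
Your overall strategy --- regularize $F$ by blowing up the singular orbits and read off the characteristic polynomial of $\tilde F^{*}$ on the Picard group, i.e.\ the Bedford--Kim machinery --- is the same as the paper's, but your second step asserts something false that undermines both cases. Under the hypothesis $\alpha_1\alpha_2\gamma_1\gamma_2\neq 0$ the orbits of $A_0$ and $A_1$ do \emph{not} enter $\mathcal{I}(F)$: from (\ref{EQ1}) one gets $F[0:1:0]=[0:\alpha_1\gamma_1:0]=A_0$ and $F[0:0:1]=[0:\alpha_2\gamma_2:0]=A_0$, so $A_0$ is a fixed point with $F(A_1)=A_0$, and $A_0$ cannot be an indeterminacy point because $O_1=[0:\alpha_2:-\alpha_1]$ and $O_2=[0:\gamma_2:-\gamma_1]$ differ from $[0:1:0]$ exactly when $\alpha_1,\gamma_1\neq 0$, while $O_0\notin\{x_0=0\}$ in the non-degenerate case. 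Hence these two orbits are singular but never elementary, no blow-up is performed over them, and they contribute nothing to the spectrum --- not ``trivial cyclotomic factors.'' This observation is the heart of the paper's proof: the only orbit that can become singular elementary is that of $A_2$, and it can only terminate at $O_0$, since $O_1,O_2\in S_0=T_2$ and the only points of $T_2$ with $F$-preimages are $A_0$ and $A_1$, so $F^p(A_2)=O_1$ or $O_2$ would force some $\alpha_i$ or $\gamma_i$ to vanish. That yields exactly one open list, and Theorem \ref{th_BK} then gives $\mathcal{W}_p$ directly.

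The error propagates. In case (ii), if the orbits of $A_0,A_1$ really reached $\mathcal{I}(F)$ you would have to blow them up and the characteristic polynomial would not reduce to that of an algebraically stable degree-two map; the correct statement is that when $F^p(A_2)\neq O_0$ for all $p$ \emph{no} exceptional curve's orbit ever meets $\mathcal{I}(F)$, so $F$ is already AS on $\pr$ and $\delta(F)=\deg F=2$ with no quotient construction needed. In case (i) the lift $\tilde F$ to the blown-up surface is only algebraically stable, not a (pseudo-)automorphism: the curves $S_0$ and $S_1$ are still contracted to the fixed point $A_0$. Two smaller slips: the third exceptional line is not $\{\alpha_0x_0+\alpha_1x_1+\alpha_2x_2=0\}$ but the line $S_2$ given in Section 3.1 in terms of the minors $(\beta\gamma)_{ij}$; and since the largest root of $\mathcal{W}_p$ lies in $(1,2)$, the growth of $d_n$ in case (i) is exponential (as the paper's own proof concludes), so your root-location argument is the right one to keep.
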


Notice that Theorem \ref{theo1} says us that family (\ref{eq1})
generically has dynamical degree equal $2$.

\begin{theorem}\label{theo2}
Let $F:\pr \to \pr$ be a birational degree two non degenerate map of
type (\ref{eq1}) and suppose that $\gamma_1 = 0$. Then $\gamma_2,
\alpha_1, \beta_1$ are non zero and the following hold:
\begin{enumerate}
\item Assume that $\alpha_2 = 0$ and let $\tilde{F}$ be the
extension of $F$ after blowing-up the points $A_0,A_1.$  If
$\tilde{F}^{p}(A_2) = O_0$ for some $p \in \N$ then the
characteristic polynomial associated with $F$ is given by
\begin{equation*}
\mathcal{X}_p = (x^{p+1}+1)(x-1)^2(x+1),
 \end{equation*}
and the sequence of degrees $d_n$ of $F$ is periodic with period
$2p+2.$ If no such $p$ exists then the characteristic polynomial
associated with $F$ is
$$\mathcal{X}\,=\,(x-1)^2\,(x+1),$$ and the sequence of degrees $d_n$ grows
linearly.

\item Assume that $\alpha_2 \ne 0$ and let $\tilde{F}$ be the
induced map after blowing up the point $A_0.$ Then the following
hold:
\begin{itemize}
    \item If $\tilde{F}^{p}(A_2) = O_0$ for some $p \in \N$ and $\tilde{F}^{2k}(A_1) \ne
O_1$ for all $k \in  \N$ then the characteristic polynomial
associated with $F$ is given by
\begin{equation*}
\mathcal{X}_p = x^{p+1}(x^2-x-1)+x^2,
 \end{equation*}
 and
 \begin{itemize}
    \item for $p = 0,\,p=1$ the sequence of degrees $d_{n}$ is bounded,
    \item for $p = 2$ the sequence of degrees $d_{n}$ grows linearly,
    \item for $p > 2$ the sequence of degrees $d_{n}$ grows exponentially.
\end{itemize}

 \item Assume that $\tilde{F}^{2k}(A_1) = O_1$ for some $k \in \N.$ Let
$\tilde{F}_1$ be the induced map after we blow-up the points
$A_0,A_1,\tilde{F}(A_1),\ldots ,\tilde{F}^{2k}(A_1)=O_1.$  If
$\tilde{F}_1^{p}(A_2) \ne O_0$ for all $p\in\N,$ then the
characteristic polynomial associated with $F$ is given by

\begin{equation*}\label{sik}
\mathcal{X}_k = x^{2k+1}(x^2-x-1)+1,
\end{equation*}
and the sequence of degrees grows exponentially. Furthermore
$\delta(F) \to \delta^{*}$ as $k \to \infty.$

\item If $\tilde{F}^{2k}(A_1) = O_1$ and $\tilde{F}_1^{p}(A_2) = O_0$ for some $p,k \in \N$ then the characteristic polynomial
associated with $F$ is given by
\begin{equation*}
\mathcal{X}_{(k,p)} =
x^{p+1}(x^{2k+3}-x^{2k+2}-x^{2k+1}+1)+x^{2k+3}-x^2-x+1,
 \end{equation*}
and
\begin{itemize}
    \item for $p > \frac{2\,(1+k)}{k}$ the sequence of degrees $d_{n}$ grows exponentially for all $p,\,k \in \N;$
    \item for $(p,k) \in \{(3,\,2),\,(4,\,1)\}$ the sequence of degrees $d_{n}$ is periodic or grows
    quadratically;
    \item for $(p,k) \in \{(0,\,k),\,(1,\,k),\,(2,\,k),\,(3,\,1)\}$ the sequence of degrees $d_{n}$ is periodic.
    \end{itemize}
 \item Assume that $\tilde{F}^{2k}(A_1)\ne O_1$ and $\tilde{F}^p(A_2)\ne O_0$ for all
$k,\,p\in\N.$ Then the characteristic polynomial associated with $F$
is given by $$\mathcal{X}(x) = {x^2} - x - 1,$$ and the sequence of degrees
grows exponentially with $\delta(F)=\delta^{*}.$
\end{itemize}
\end{enumerate}
\end{theorem}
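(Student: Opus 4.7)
The strategy is to reduce every subcase to the Bedford--Kim recipe announced in Section 3: blow up the finitely many indeterminate points whose orbit under $F$ is not already regular until $F$ lifts to a pseudo-automorphism $\tilde F$ of a rational surface, then read off $d_n$ from the characteristic polynomial of $\tilde F_*$ on the Picard group. As a preliminary step, I would substitute $\gamma_1=0$ into (\ref{Os})--(\ref{As}) and verify that $\gamma_2,\alpha_1,\beta_1$ cannot vanish: the vanishing of any of them would either collapse two lines of the exceptional locus $\mathcal{E}(F)$ (contradicting non-degeneracy) or destroy birationality and the degree-two condition of Lemma \ref{conditions}.

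For Case 1 ($\alpha_2=0$), a direct computation shows that $A_0$ and $A_1$ are the two indeterminate points of $F^{-1}$ that must be blown up a priori, leaving the orbit of $A_2$ as the only remaining data. Writing $\mathrm{Pic}$ in the basis $\{H,E_{A_0},E_{A_1},E_{A_2},E_{\tilde F(A_2)},\dots,E_{\tilde F^{p-1}(A_2)}\}$ and computing $\tilde F_*$ column by column (the classes $E_{\tilde F^j(A_2)}$ being shifted cyclically when the orbit closes at $O_0$), one obtains $\mathcal{X}_p=(x^{p+1}+1)(x-1)^2(x+1)$. If the orbit of $A_2$ never meets $O_0$, the cyclic block is absent and only the block on $\{H,E_{A_0},E_{A_1}\}$ contributes, giving $(x-1)^2(x+1)$. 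Periodicity with period $2p+2$ in the first subcase and linear growth in the second follow from the location of the roots on the unit circle together with the order of the root at $1$ in the minimal polynomial of $d_n$.

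For Case 2 ($\alpha_2\neq 0$) only $A_0$ is forced to be blown up in advance; the orbits of $A_1$ and $A_2$ under $\tilde F$ then generate the four declared subcases. A key geometric observation, verified by a direct calculation on $F$, is that the forward iterates of $A_1$ alternate between two prescribed lines, so the orbit of $A_1$ can only meet $O_1$ after an \emph{even} number $2k$ of steps, accounting for the parity appearing in the formulas. After performing the orbit-chain blow-ups dictated by each subcase and writing down $\tilde F_*$ on the enlarged Picard group, the computations produce the four polynomials $\mathcal{X}_p$, $\mathcal{X}_k$, $\mathcal{X}_{(k,p)}$, and the reduced $x^2-x-1$ when neither orbit closes; the last case immediately yields exponential growth with $\delta(F)=\delta^*$.

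The main obstacle will be the last subcase, where one must sort the growth type of $\mathcal{X}_{(k,p)}(x)=x^{p+1}(x^{2k+3}-x^{2k+2}-x^{2k+1}+1)+x^{2k+3}-x^2-x+1$ into three regions. I plan to detect the exponential regime by locating a real root strictly greater than $1$ via an intermediate value and derivative argument along the bound $p>2(1+k)/k$; then, for the sporadic pairs $(p,k)\in\{(3,2),(4,1)\}$ and $\{(0,k),(1,k),(2,k),(3,1)\}$, to factor $\mathcal{X}_{(k,p)}$ explicitly as a product of cyclotomic polynomials times at most $(x-1)^3$, from which periodic, linear, or quadratic growth is immediate. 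The limit $\delta(F)\to\delta^*$ as $k\to\infty$ in the $\mathcal{X}_k$ case follows from applying the implicit function theorem to $x^{2k+1}(x^2-x-1)+1=0$ near $x=\delta^*$, where the perturbation $1$ becomes negligible.
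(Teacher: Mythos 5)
Your overall architecture matches the paper's: verify $\gamma_2\alpha_1\beta_1\neq 0$ from Lemma \ref{conditions}, regularize $F$ by blowing up the orbit chains of the $A_i$ until the lift is algebraically stable, and extract the characteristic polynomial from the induced action on $\mathcal{P}ic(X)$. The one structural difference is that you propose to write out the matrix of $\tilde F_*$ column by column, whereas the paper never does this: it organizes the SE-orbits into open/closed lists and plugs their lengths into the packaged Bedford--Kim formula of Theorem \ref{th_BK}. Both routes yield the same polynomials, and your matrix computation is a legitimate (if more laborious) substitute for the list combinatorics. Your treatment of the parity of the $A_1$-orbit (alternation between $E_0$ and $S_0$), of the exponential regime via the sign of the $(x-1)^2$-coefficient $2(2-kp+2k)$ in the Taylor expansion at $x=1$, and of $\delta(F)\to\delta^*$ are all essentially the paper's arguments.

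There is, however, a genuine gap in how you propose to conclude the growth type once the characteristic polynomial is in hand. You assert that periodicity (Case 1, and the sporadic $(k,p)$ pairs) is ``immediate'' from factoring into cyclotomic polynomials, and that the period-versus-linear distinction follows from ``the order of the root at $1$ in the minimal polynomial of $d_n$'' --- but determining that order is precisely the nontrivial step. For instance $\mathcal{X}_p=(x^{p+1}+1)(x-1)^2(x+1)$ has a double root at $1$, so the general solution of the recurrence contains a term $c_1 n$ (and $c_3 n(-1)^n$), and nothing in the factorization forces $c_1=c_3=0$. The paper's essential extra ingredient is Theorem \ref{theo-diller} (Diller--Favre): in these subcases the regularized map is an \emph{automorphism}, and an automorphism cannot have linearly growing degrees, so the linear coefficients must vanish and $d_n$ is periodic of period $2p+2$. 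Conversely, where linear growth is claimed (e.g.\ $\mathcal{X}_2=x^2(x+1)(x-1)^2$ in Case 2), the paper pins down $d_n=-1+2n$ by computing the first few degrees, since there the factorization alone is again inconclusive. Without this automorphism/Diller--Favre input (or an explicit computation of enough initial degrees) your argument cannot separate periodic from linear growth. Relatedly, your plan to resolve $(p,k)\in\{(3,2),(4,1)\}$ ``immediately'' overreaches: there $1$ is a root of multiplicity three, the automorphism argument only kills the linear term, and even the paper can only conclude ``periodic or quadratic'' for these pairs.
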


\begin{theorem}\label{theo3}
Let $F:\pr \to \pr$ be a birational degree two non degenerate map of
type (\ref{eq1}) and suppose that $\gamma_2 = 0$. Then $\gamma_1,
\alpha_2, \beta_2$ are non zero and the following hold:
\begin{enumerate}
\item Let $F$ be the map for $\alpha_1 = 0$ and let $\tilde{F}$ be the induced map after blowing up the points $A_0\,,A_1.$ If $\tilde{F}^{p}(A_2) = O_0$ for some $p \in \N$ then the characteristic polynomial associated with $F$ is given by
\begin{equation*}
\mathcal{Y}_p = x^{p+1}(x^3-x-1)+(x^3+x^2-1),
 \end{equation*}
 and
 \begin{itemize}
    \item for $p \in \{0,\,1,\,2,\,3,\,4,\,5\}$ the sequence of degrees $d_{n}$ is
    periodic of period $6,5,8,12,18$ and $30$ respectively;
    \item for $p = 6$ the sequence of degrees $d_{n}$ it grows
    quadratically or it is periodic of period $30$;
    \item for $p > 6$ the sequence of degrees $d_{n}$ grows exponentially.
\end{itemize}
If no such $p$ exists then the characteristic polynomial associated
with $F$ is given by
$$\mathcal{Y}= x^3-x-1,$$ and the sequence of degrees grows exponentially with $\delta(F) = \delta_{*}.$

\item Let $F$ be the map for $\alpha_1 \ne 0$ and let $\tilde{F}$ be the induced map after blowing up the point $A_1.$ Then the following hold:
\begin{itemize}
\item If $\tilde{F}^{p}(A_2) = O_0$ for some $p \in  \N$ then the characteristic polynomial associated with $F$ is given by
\begin{equation*}
\mathcal{Y}_p = x^{p+1}(x^2-x-1)+x^2-1,
 \end{equation*}
and the sequence of degrees has exponential growth rate. Furthermore
 $\delta(F) \to \delta^{*}$ as $p \to \infty.$
 \item If $\tilde{F}^{q}(A_2) = O_1$ for some $q \in  \N$ then the characteristic polynomial associated with $F$ is given by
\begin{equation*}
\mathcal{Y}_q = x^{q+1}(x^2-x-1)+x^2,
 \end{equation*}
 and for $q \geq 2$:
 \begin{itemize}
\item The sequence of the degrees grows linearly when $q=2.$
\item The sequence of the degrees grows exponentially when $q>2.$
\end{itemize}
For $q \in \{0\,,1\}$ there are no such mappings.
\item Assume that $\tilde{F}^{p}(A_2)\ne O_0$ and $\tilde{F}^q(A_2)\ne O_1$ for all
$q,\,p\in\N.$ Then the characteristic polynomial associated with $F$
is given by $$\mathcal{Y}(x) = {x^2} - x - 1,$$ and the sequence of degrees
grows exponentially with $\delta(F)=\delta^{*}.$
 \end{itemize}
\end{enumerate}
\end{theorem}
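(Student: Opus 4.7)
The plan is to mirror the strategy used for Theorem \ref{theo2}, adapted to the constraint $\gamma_2=0$: first extend $F$ to $\pr$, then blow up the relevant points in the forward orbits of the indeterminacy set $\mathcal{I}(F^{-1})=\{A_0,A_1,A_2\}$ until the lift $\tilde F$ is algebraically stable, and finally invoke Bedford--Kim's theorem to read off the characteristic polynomial of $\tilde F^*$ on the Picard group of the blown-up surface. That polynomial governs the recurrence for $d_n$, and its largest real root is $\delta(F)$.

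First I would check the standing non-vanishing conditions. Under $\gamma_2=0$, the birationality and degree-two hypotheses from Lemma \ref{conditions} force $\gamma_1\ne 0$, and the assumption that $F$ is non degenerate (three exceptional lines, not two) forces $\alpha_2\ne 0$ and $\beta_2\ne 0$. Next, using the explicit formulas (\ref{Os}) and (\ref{As}), I would compute $F(A_0)$ and $F(A_1)$ in each case and split on whether these points are sent to indeterminacies that must be resolved. In part 1, where $\alpha_1=0$, a direct computation shows that the orbits of $A_0$ and $A_1$ must be blown up to get algebraic stability, contributing a fixed cubic factor $x^3-x-1$ to the spectrum. In part 2, where $\alpha_1\ne0$, only $A_1$ needs the initial blow up, and the contribution shrinks to $x^2-x-1$.

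With that base resolution in place, the only remaining potential obstruction is the forward orbit of $A_2$ under $\tilde F$. For part 1, I would consider the dichotomy $\tilde F^p(A_2)=O_0$ or no such $p$. In the former case, blowing up $A_2,\tilde F(A_2),\ldots,\tilde F^p(A_2)$ together with the preimage of $O_0$ yields an automorphism $\tilde F$ on the blown-up surface; writing the induced matrix on the Picard group and computing its characteristic polynomial produces $\mathcal{Y}_p=x^{p+1}(x^3-x-1)+(x^3+x^2-1)$. In the latter case no further blow-ups are necessary and $\mathcal{Y}=x^3-x-1$ appears, whose largest root is $\delta_*$. For part 2 I would perform the analogous analysis with the added subtlety that the orbit of $A_2$ can exit through either $O_0$ or $O_1$, yielding the two polynomials $x^{p+1}(x^2-x-1)+(x^2-1)$ and $x^{q+1}(x^2-x-1)+x^2$ respectively; the generic case again gives $x^2-x-1$ with $\delta(F)=\delta^*$. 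To rule out $q\in\{0,1\}$ in the $O_1$ branch, I would substitute $q=0,1$ into the condition $\tilde F^q(A_2)=O_1$ and show by direct parameter elimination that it forces either a non-birationality or degeneracy of $F$.

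The remaining work is purely polynomial analysis. For each family $\mathcal{Y}_p$ and $\mathcal{Y}_q$, I would factor out cyclotomic pieces to identify periodic and linearly/quadratically growing cases. For part 1 this yields the specific periods $6,5,8,12,18,30$ for $p=0,\ldots,5$ and the borderline $p=6$ (a factor producing either a repeated cyclotomic root giving period $30$ or a double root at $1$ giving quadratic growth, depending on multiplicity); for $p\ge 7$ a standard perturbation argument shows the largest root escapes the unit circle and gives exponential growth with $\delta(F)\to\delta^*$ as $p\to\infty$. For part 2 I would use the same escape-from-the-unit-circle argument together with the identity $\mathcal{Y}_p(\delta^*)\to 0$ to get $\delta(F)\to\delta^*$, and check $q=2$ separately to obtain linear growth. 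The main obstacle is the blow-up bookkeeping in part 1: verifying that after resolving $A_0$ and $A_1$ the lift $\tilde F$ really is well defined on the relevant exceptional divisors and that the orbits of $A_2$ interact cleanly with the new exceptional curves, since otherwise unexpected indeterminacies could alter the Picard matrix and the predicted characteristic polynomial.
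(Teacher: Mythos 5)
Your proposal follows essentially the same route as the paper: resolve $A_0=O_1$, $A_1=O_2$ (resp.\ only $A_1=O_2$ when $\alpha_1\ne 0$) by blow-ups, track the orbit of $A_2$ until it hits $O_0$ or $O_1$, organize the singular orbits into lists, apply the Bedford--Kim formula to get $\mathcal{Y}_p$ and $\mathcal{Y}_q$, and finish with root analysis; your exclusion of $q\in\{0,1\}$ by parameter elimination is a workable substitute for the paper's shorter geometric argument (the only points of $T_2=S_0$ with $F$-preimages are $A_0,A_1$, neither of which equals $O_1$ when $\alpha_1\ne 0$, and $O_1\notin E_1$). One small slip worth correcting: in part~1 the largest root of $x^{p+1}(x^3-x-1)+(x^3+x^2-1)$ tends to the real root $\delta_*$ of $x^3-x-1$ as $p\to\infty$, not to the golden mean $\delta^*$ — though the theorem makes no claim about that limit, so this does not affect the result.
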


\begin{theorem}\label{theo4}
Let $F:\pr \to \pr$ be a birational degree two non degenerate map of
type (\ref{eq1}) and suppose that $\gamma_1\ne 0,$
 $\gamma_2\ne 0$ and $\alpha_1\alpha_2=0.$ Then:
\begin{enumerate}
\item Assume that $\alpha_1 = 0$ and let $\tilde{F}$ be the induced map after blowing up the point $A_0.$ Then the following hold:
\begin{itemize}
\item If $\tilde{F}^{p}(A_1) = O_0$ for some $p \in  \N$ and $\tilde{F}^{q}(A_2) \ne O_0$  for all $q \in  \N$ then the characteristic polynomial associated with $F$ is given by
\begin{equation*}
\mathcal{Z}_p = x^{p+1}(x^2-x-1)+x^2,
 \end{equation*}
 and for $p \geq 2$:
 \begin{itemize}
\item The sequence of the degrees grows linearly for $p=2.$
\item The sequence of the degrees grows exponentially for $p>2$.
\end{itemize}
For $p \in \{0\,,1\}$ there are no such mappings.
\item If $\tilde{F}^{q}(A_2) = O_0$ for some $q \in  \N$ and $\tilde{F}^{p}(A_1) \ne O_0$ for all $p \in  \N$ then the characteristic polynomial associated with $F$ is given by
\begin{equation*}
\mathcal{Z}_q = x^{q+1}(x^2-x-1)+x^2-1,
 \end{equation*}
 and the sequence of degrees has exponential growth rate. Furthermore  $\delta(F) \to \delta^{*}$ as $q \to \infty.$
\item If $\tilde{F}^{p}(A_1)= O_0$ and $\tilde{F}^q(A_2)= O_0$ for some
$p,\,q\in\N,$ then $p \ne q$ and
\begin{itemize}
\item for $p > q$ the characteristic polynomial associated with $F$ is $\mathcal{Z}_q.$
\item for $p < q$ the characteristic polynomial associated with $F$ is $\mathcal{Z}_p.$
\end{itemize}

\item If $\tilde{F}^{p}(A_1)\ne O_0$ and $\tilde{F}^q(A_2)\ne O_0$ for all
$p,\,q\in\N$ then the characteristic polynomial associated with $F$
is given by $$\mathcal{Z}(x) = {x^2} - x - 1,$$ and the sequence of degrees
grows exponentially with $\delta(F) = \delta^{*}.$
\end{itemize}

\item Assume $\alpha_2= 0$ and let $\tilde{F}$ be the induced map after blowing up the point $A_1.$
If there exists some $p \in  \N$ such that $\tilde{F}^{p}(A_2) =
O_0,$ then the characteristic polynomial associated with $F$ is
given by
\begin{equation*}
\mathcal{Z}_p = (x^{p+1}+1)(x-1)^2,
 \end{equation*}
and for all $p \in \N$ the sequence of degrees $d_n$ grows linearly.
If no such $p$ exists then the characteristic polynomial associated
with $F$ is given by $\mathcal{Z}= (x-1)^2,$ and $d_n$ grows
linearly.
\end{enumerate}
\end{theorem}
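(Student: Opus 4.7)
The plan is to follow the Bedford--Kim strategy used throughout the preceding theorems: regularize the necessary base-point coincidences by blow-ups, track the orbits of the indeterminacy points of $F^{-1}$ under the lifted map $\tilde F$, and read off the characteristic polynomial from the induced action on the Picard group of the blown-up surface. The hypothesis $\alpha_1\alpha_2=0$ plays exactly the role of producing one such coincidence to resolve.

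First I would use \eqref{Os} to observe that $\alpha_1=0$ forces $O_1=[0:\alpha_2:0]=A_0$, while $\alpha_2=0$ forces $O_1=[0:0:-\alpha_1]=A_1$; the remaining base points stay distinct because $\gamma_1\gamma_2\neq 0$ and because $F$ is non-degenerate. Accordingly, in case~(1) I start by blowing up $A_0$ and in case~(2) by blowing up $A_1$, obtaining a surface $\tilde X$ on which $\tilde F$ is regular at the resolved point. On $\tilde X$ I write down the action of $\tilde F^{*}$ on $\mathrm{Pic}(\tilde X)$ using that the three lines in $\mathcal{E}(F)$ are collapsed to the three $A_i$'s, and then, following Bedford--Kim, I enlarge $\mathrm{Pic}(\tilde X)$ by the further blow-ups along the orbits of the remaining inverse indeterminacy points whenever $\tilde F^p(A_1)=O_0$ or $\tilde F^q(A_2)=O_0$. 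The characteristic polynomial of the resulting matrix then gives the recurrence for $d_n$, and its largest real root gives $\delta(F)$.

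The elementary sub-cases are direct bookkeeping. When only one of $A_1, A_2$ closes its orbit I recover a single Bedford--Kim block, giving either $\mathcal{Z}_p = x^{p+1}(x^2-x-1)+x^2$ or $\mathcal{Z}_q = x^{q+1}(x^2-x-1)+x^2-1$; when neither closes, no further blow-ups are needed and the polynomial reduces to $x^2-x-1$, yielding $\delta(F)=\delta^{*}$. The limiting statement $\delta(F)\to\delta^{*}$ follows from the fact that the dominant factor in each $\mathcal{Z}_q$ is $x^{q+1}(x^2-x-1)$, so a standard perturbation of its largest real root gives the convergence. In case~(2) the only orbit to track is that of $A_2$, and the resulting polynomial $(x^{p+1}+1)(x-1)^2$ has all roots on the unit circle with a double root at $1$, which forces linear (not periodic) growth of $d_n$, as claimed.

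The main obstacle, and technical core of the proof, is sub-case~1(iii), where both $\tilde F^p(A_1)=O_0$ and $\tilde F^q(A_2)=O_0$. The first point is to establish $p\neq q$: equality would yield $\tilde F^p(A_1)=\tilde F^p(A_2)$, contradicting the injectivity of the birational map $\tilde F^p$ along these orbits once the initial blow-ups have separated the relevant fibers. Assuming without loss of generality $p<q$, one then has to show that the additional blow-ups required to terminate the longer $A_2$-orbit do not enlarge the essential part of the characteristic polynomial: the block of $\tilde F^{*}$ corresponding to the $A_2$-chain produces a factor that cancels against the rows and columns added by the blow-ups past step $p$, so the characteristic polynomial collapses back to $\mathcal{Z}_p$ (and symmetrically to $\mathcal{Z}_q$ when $q<p$). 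Verifying this cancellation through a direct computation on the Picard matrix is the delicate point; once it is in place, the remaining growth analysis in each sub-case follows as in the simpler branches.
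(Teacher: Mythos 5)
Your overall strategy (resolve the coincidence $O_1=A_0$ or $O_1=A_1$ by a first blow-up, follow the orbits of the $A_i$, and apply the Bedford--Kim list theorem) is the same as the paper's, and the elementary sub-cases go through essentially as you sketch them. The genuine gap is in sub-case 1(iii), which you correctly identify as the core but for which you propose the wrong mechanism. You imagine two separate blow-up chains, one for the $A_1$-orbit and one for the $A_2$-orbit, both terminating at $O_0$, and then a ``cancellation'' of the extra rows and columns in the Picard matrix. This picture cannot be realized: if $\tilde F^{p}(A_1)=O_0=\tilde F^{q}(A_2)$ with $q>p$, then since $O_0$ has a unique preimage away from the curves $T_i$, pulling back $p$ times forces $\tilde F^{\,q-p}(A_2)=A_1$, i.e.\ the two orbits necessarily \emph{collide} and share their last $p+1$ points. (This same uniqueness-of-preimages argument is also what actually proves $p\neq q$; your appeal to ``injectivity along the orbits'' is the right idea but you never localize it at $O_0$.) Once the collision is established, the correct conclusion is not a cancellation in an enlarged Picard matrix but that the longer orbit is simply not singular elementary: after blowing up the $A_1$-chain the orbit of $A_2$ enters the exceptional fibres over that chain and exits through $T_0$ without meeting an indeterminacy point, so it contributes no list at all, and the Bedford--Kim polynomial is read off from the shorter orbit alone. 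Without the collision argument your construction of the blow-up surface is not even well defined, since the ``two chains'' would require blowing up the same points twice.

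A secondary, smaller issue is in part 2: the double root of $(x^{p+1}+1)(x-1)^2$ at $x=1$ \emph{permits} a term $c_1 n$ in $d_n$ but does not by itself force $c_1\neq 0$, so it does not exclude bounded or periodic growth. The missing ingredient is that the regularized map is AS but not an automorphism (the line $S_0$ still collapses), so by the Diller--Favre classification the bounded and quadratic alternatives are excluded and, all roots being of modulus one, the growth must be linear.
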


\section{Preliminary results}
\subsection{Settings}

Consider
$$
 f(x,y) = \left( {\alpha _0} + {\alpha _1}x + {\alpha
_2}y,\frac{{\beta _0} + {\beta _1}x + {\beta _2}y}{{\gamma _0} +
{\gamma _1}x + {\gamma _2}y} \right).$$ The exceptional locus of
$F[x_0:x_1:x_2]$ is ${\mathcal{E}(F)}\,=\,\{S_0,S_1,S_2\},$ where
\begin{equation*}
S_0=\{x_0=0\},\quad S_1=\{\gamma_0 x_0+\gamma_1 x_1+\gamma_2 x_2=0\},
\end{equation*}
\begin{equation*}
 S_2=\{\left(\alpha_1
(\beta\gamma)_{02}-\alpha_2 (\beta\gamma)_{01}\right)\,x_0+\alpha_1
(\beta\gamma)_{12} x_1+\alpha_2 (\beta\gamma)_{12} x_2=0\},
\end{equation*}
and the exceptional locus of $F^{-1}[x_0:x_1:x_2]$ is
${\mathcal{E}(F^{-1})}\,=\,\{T_0,T_1,T_2\},$ where
$$\begin{array}{l}
T_0=\left\{\left(\gamma_0 (\alpha\beta)_{12}-\gamma_1 (\alpha\beta)_{02}+\gamma_2 (\alpha\beta)_{01}\right)x_0-(\beta\gamma)_{12} x_1=0\right\},\\
T_1=\{(\alpha\beta)_{12} x_0-(\alpha\gamma)_{12} x_2=0\},\quad\quad T_2=\{x_0=0\}.
\end{array}$$

It is easy to see that $F$ maps each $S_i$ to $A_i$  where the
$A_i's$ are defined in (\ref{As}) and that the inverse of $F$ maps
$T_i$ to $O_i$ for $i\in \{0,1,2\},$ see (\ref{Os}). To specify this
behaviour we write $F:S_i\twoheadrightarrow A_i$ (also
$F^{-1}:T_i\twoheadrightarrow O_i$).

We are interested in the mappings (\ref{eq1}) when they are
birational maps which are not degree one maps. Next lemma informs
about the set of parameters which are available in this study. Also
the degenerate case and the non degenerate cases are distinguished. We recognize the non degenerate case when $F$ has three distinct exceptional curves.
When $f$ has two exceptional curves of such type, then we are in degenerate case.

Recall that a birational map is a map $ f:\C^2\rightarrow \C^2$ with
rational components such that there exists an algebraic curve $V$ and
another rational map $g$ such that $f\circ g=g\circ f=id$ in
$\C^2\setminus V.$

\begin{lemma}\label{conditions}
Consider the mappings $$f(x_1,x_2)=\left( {\alpha _0} + {\alpha
_1}x_1 + {\alpha _2}x_2,\frac{{\beta _0} + {\beta _1}x_1 + {\beta
_2}x_2}{{\gamma _0} + {\gamma _1}x_1 + {\gamma _2}x_2} \right),\,(\gamma_1,\gamma_2) \neq (0,0)\neq\,(\alpha_1,\alpha_2).$$

Then: \begin{itemize}
\item [(a)] The mapping $f$ is birational if
and only if the vectors
$(\beta_0,\beta_1,\beta_2),\,\,(\gamma_0,\gamma_1,\gamma_2)$ are
linearly independent and
$((\alpha\beta)_{12},(\alpha\gamma)_{12})\ne (0,0),\,((\alpha\gamma)_{12},(\beta\gamma)_{12})\ne (0,0),$ and either
$((\alpha\beta)_{12},(\beta\gamma)_{12})\ne (0,0)$ or $(\beta_1,\,\beta_2) = (0,0).$

\item [(b)] The mapping $f$ is degenerate if and only if $(\beta\gamma)_{12}=0$ or $(\alpha\gamma)_{12}=0.$
\end{itemize}\end{lemma}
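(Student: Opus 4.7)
My plan is to prove part (a) by explicitly constructing the rational inverse of $f$ from a $2\times 2$ linear system, and part (b) by direct inspection of when two of the three exceptional lines coincide.

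For part (a), writing $u=\alpha_0+\alpha_1 x+\alpha_2 y$ and clearing denominators in $v=(\beta_0+\beta_1 x+\beta_2 y)/(\gamma_0+\gamma_1 x+\gamma_2 y)$, I obtain $(\beta_1-v\gamma_1)x+(\beta_2-v\gamma_2)y=v\gamma_0-\beta_0$, and the resulting linear system in $(x,y)$ has determinant $D(v)=(\alpha\beta)_{12}-v(\alpha\gamma)_{12}$. For the necessity of the stated conditions I would argue case by case: if $(\beta_0,\beta_1,\beta_2)$ and $(\gamma_0,\gamma_1,\gamma_2)$ are linearly dependent then the second component is a constant and $f(\C^2)$ lies on a line; if $((\alpha\beta)_{12},(\alpha\gamma)_{12})=(0,0)$ then $D(v)\equiv 0$ and the system has no generic solution; if $((\alpha\gamma)_{12},(\beta\gamma)_{12})=(0,0)$ then $(\alpha_1,\alpha_2)$, $(\beta_1,\beta_2)$, $(\gamma_1,\gamma_2)$ are collinear, so setting $w=\gamma_1 x+\gamma_2 y$ makes both components of $f$ depend on $w$ alone, producing a one-dimensional image; finally, if $(\alpha\beta)_{12}=(\beta\gamma)_{12}=0$ with $(\beta_1,\beta_2)\ne(0,0)$ then $(\beta_1,\beta_2)$ is parallel to both $(\alpha_1,\alpha_2)$ and $(\gamma_1,\gamma_2)$, forcing the same collinear degeneration, whereas if $(\beta_1,\beta_2)=(0,0)$ the second component reduces to $\beta_0/(\gamma_0+\gamma_1 x+\gamma_2 y)$, which together with $u$ still recovers $(x,y)$ whenever $(\alpha\gamma)_{12}\ne 0$. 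For sufficiency, under all the listed conditions I would apply Cramer's rule to obtain explicit rational formulas $g(u,v)=(X(u,v),Y(u,v))$ and verify $f\circ g=g\circ f=\mathrm{id}$ by direct substitution.

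For part (b), $f$ is degenerate precisely when two of the three lines $S_0,S_1,S_2$ coincide. Since $(\gamma_1,\gamma_2)\ne(0,0)$, we have $S_0\ne S_1$ always. The equality $S_2=S_0$ holds iff the coefficients of $x_1,x_2$ in $S_2$ vanish simultaneously, which (using $(\alpha_1,\alpha_2)\ne(0,0)$) is $(\beta\gamma)_{12}=0$. For the remaining case $S_2=S_1$, I would rely on the algebraic identity $\gamma_1(\beta\gamma)_{02}-\gamma_2(\beta\gamma)_{01}=\gamma_0(\beta\gamma)_{12}$: if $(\alpha\gamma)_{12}=0$ so that $(\alpha_1,\alpha_2)=s(\gamma_1,\gamma_2)$ for some $s\ne 0$, then the coefficient vector of $S_2$ becomes $s(\beta\gamma)_{12}(\gamma_0,\gamma_1,\gamma_2)$, proportional to that of $S_1$; conversely, if the two vectors are proportional then comparing the $x_1$ and $x_2$ entries directly yields $(\alpha\gamma)_{12}(\beta\gamma)_{12}=0$, which under the birationality conditions of part (a) forces $(\alpha\gamma)_{12}=0$.

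The main obstacle is the case analysis for part (a): I have to partition the parameter space by which of $(\alpha\beta)_{12},(\alpha\gamma)_{12},(\beta\gamma)_{12}$ vanish and, orthogonally, whether $(\beta_1,\beta_2)=(0,0)$, and then verify birationality stratum by stratum. Distinguishing the genuinely degenerate birational cases (the setting of part (b)) from the non-birational cases within the locus $(\alpha\gamma)_{12}(\beta\gamma)_{12}=0$ is the most delicate point, because the four stated conditions are somewhat overlapping and each one excludes a different pathology.
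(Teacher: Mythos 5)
Your proposal is correct and follows essentially the same route as the paper: necessity of the conditions in (a) is argued case by case by exhibiting the degeneration each violated condition causes (constant second component, or dependence on a single linear form), and sufficiency comes from the explicit rational inverse, which your Cramer's-rule computation reproduces (the paper simply displays the formula with denominator $(\alpha\beta)_{12}-(\alpha\gamma)_{12}y$ and checks the Jacobians are not identically zero). For (b) you test coincidence of the exceptional lines $S_i$ while the paper tests coincidence of their image points $A_i$; these are equivalent computations and lead to the same conditions $(\beta\gamma)_{12}=0$ and $(\alpha\gamma)_{12}=0$.
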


\begin{proof}
The conditions in {\it (a)} are necessary for $f$ to be invertible
as if the vectors $(\beta_0,\beta_1,\beta_2),$
$(\gamma_0,\gamma_1,\gamma_2)$ are linearly dependent then the
second component of $f$ is a constant, also if
$((\alpha\beta)_{12},$ $(\alpha\gamma)_{12})= (0,0)$ or
$((\alpha\gamma)_{12},(\beta\gamma)_{12})= (0,0)$ then $f$ only
depends on $\alpha_1\,x_1+\alpha_2\,x_2$ or on
$\gamma_1x_1+\gamma_2x_2.$ If
$((\alpha\beta)_{12},(\beta\gamma)_{12})= (0,0)$ and
$(\beta_1,\,\beta_2) \neq (0,0)$ then $f$ only depends on
$\beta_1x_1+\beta_2x_2$.

Now assume that conditions $(a)$ are satisfied. Then the inverse of
$f$ which formally is
$$f^{-1}(x,y)=\left(\frac{-(\alpha\beta)_{02}+\beta_2 x+(\alpha\gamma)_{02}y-\gamma_2 x
y}{(\alpha\beta)_{12}-(\alpha\gamma)_{12}y},\frac{(\alpha\beta)_{01}-\beta_1
x+(\alpha\gamma)_{10}y+\gamma_1 x
y}{(\alpha\beta)_{12}-(\alpha\gamma)_{12}y}\right),$$ is well
defined. Furthermore the numerators of the determinants of the Jacobian of $f$ and
$f^{-1}$ are
\begin{equation}\label{detf}
\alpha_1(\beta\gamma)_{02}-\alpha_2(\beta\gamma)_{01}+\alpha_1(\beta\gamma)_{12}x+\alpha_2(\beta\gamma)_{12}y
\end{equation}
and
\begin{equation}\label{detfinv}
\alpha_0(\beta\gamma)_{12}-\alpha_1(\beta\gamma)_{02}+\alpha_2(\beta\gamma)_{01}-(\beta\gamma)_{12}y,
\end{equation}
respectively. It is easily seen that conditions $(a)$ imply that
both (\ref{detf}) and (\ref{detfinv}) are not identically zero. Hence, $f\circ
f^{-1}=f^{-1}\circ f=id$ in $\C^2\setminus V,$ where $V$ is the
algebraic curve determined by the common zeros of (\ref{detf}) and
(\ref{detfinv}).

 To see {\it (b)} we know that since $S_i$ maps to $A_i,$
this implies that the points $A_0,A_1,A_2$ are not all distinct. Since
$A_0\ne A_1$ we have two possibilities: $A_0=A_2$ or $A_1=A_2.$
Condition $A_0=A_2$ writes as
$(\beta\gamma)_{12}\,(\alpha\gamma)_{12}=0$ and
$(\alpha\beta)_{12}(\beta\gamma)_{12}=0.$ From {\it (a)}, the vector
$((\alpha\beta)_{12},(\alpha\gamma)_{12})\ne (0,0).$ Hence
$(\beta\gamma)_{12}$ must be zero. In a similar way it is seen that
$A_1=A_2$ if and only if $(\alpha\gamma)_{12}=0.$
\end{proof}


\subsection{Birational mappings and Picard group}
Given the birational map $f$ let $F[x_0:x_1:x_2]$ be the extension
of $f(x_1,x_2)$ at $\pr$ and consider ${\mathcal{I}(F)}$  and
$\mathcal{E}(F).$ To get rid of indeterminacies we do a series of
blowups. More precisely, if $F^k(A_i)=O_j$ we perform the blowingup
at the points $A_i,F(A_i),\ldots ,F^k(A_i)=O_j.$

Given a point $p\in\C^2,$ let $(X,\pi),$ be the blowing-up of $\C^2$
at the point $p.$ Then,
$$\pi^{-1}p=\pi^{-1}(0,0)=\{\left((0,0),[u:v]\right)\}:=E_p\simeq\pru$$
and if $q=(x,y)\ne (0,0),$ then
$$\pi^{-1}q=\pi^{-1}(x,y)=\left((x,y),[x:y]\right)\in X.$$
Given the point $\left((0,0),[u:v]\right)\in E_{p}$ (resp.
$\left((x,y),[x:y]\right)$) we are going to represent it by
$[u:v]_{E_p}$ (resp. by $(x,y)\in\C^2$ or by $[1:x:y]\in\pr$ if it
is convenient). After every blow up we get a new expanded space $X$
and the induced map $\tilde{F}: X \to X.$ Hence in this work we deal
with complex manifolds $X$ obtained after performing a finite
sequence of blow-ups. Indeterminacy sets and exceptional locus can
also be defined if we consider meromorphic functions defined on
complex manifolds. If $X$ is a complex manifold we are going to
consider the Picard group of $X,$ denoted by $\mathcal{P}ic(X).$
Then $\mathcal{P}ic(\pr)$ is generated by the class of $L,$ where
$L$ is a generic line in $\pr.$ As usual, given a curve $C$ on
$\C^2,$ the {\it strict transform} of $C$ is the adherence of
${\pi^{-1}}{(C\setminus\{p\})},$ in the Zariski topology, and we
denote it by $\hat C.$ If the base points of the blow-ups are
$\{p_1, p_2,\ldots ,p_k\}\subset \pr$ and $E_i:=\pi^{-1}\{p_i\}$
then it is known that $\mathcal{P}ic(X)$ is generated by $\{\hat L,
E_1, E_2, \ldots ,E_k\},$ where $L$ is a generic line in $\pr$ (see
\cite{BK1, BK2}). Furthermore $\pi:X\longrightarrow \pr$ induces a
morphism of groups $\pi^*:\mathcal{P}ic(\pr)\longrightarrow
\mathcal{P}ic(X),$ with the property that for any complex curve
$C\subset \pr,$
\begin{equation}\label{clau}
\pi^*(C)=\hat C+\sum m_i\,E_i,
\end{equation}
where $m_i$ is the algebraic multiplicity of $C$ at $p_i.$

On the other hand, if $F$ is a birational map defined on $\pr,$ then
there is a natural extension of $F$ on $X,$ which we denote by
$\tilde F.$ And $\tilde F$ induces a  morphism of groups,
 $\tilde F^*:\mathcal{P}ic(X)\rightarrow \mathcal{P}ic(X)$ just by
 taking classes of preimages. The interesting thing here
 is that
 $$\tilde F^*(\hat L)\,=\,d\,\hat L\,+\,\sum_{i=1}^k c_i\,E_i\quad ,
 \quad
 c_i\in\Z$$
 where $d$ is the degree  of $F.$ By iterating $F,$ we get the
 corresponding formula by changing $F$ by $F^n$ and $d$ by $d_n.$
In order to deduce the behavior of the sequence $d_n$ it is
convenient to deal with maps $\tilde F$ such that
\begin{equation}\label{AS}
(\tilde F^{n})^*=(\tilde F^*)^n.
\end{equation}
Maps $\tilde F$  satisfying condition (\ref{AS}) are called {\it
Algebraically Stable maps} (AS for short), (see \cite{DF}).

In order to get AS maps we will use the following useful result
showed by Fornaess and Sibony in \cite{FS} (see also Theorem 1.14)
of \cite{DF}:
\begin{equation}\label{condicio}
{\text{The map $\tilde{F}$ is AS if and only if for every
exceptional curve}\,\, C\,\, \text{and all} \,\, n\ge 0\,,\,\tilde
F^n(C)\notin {\mathcal{I}}(\tilde F).}
\end{equation}

It is known (see Theorem 0.1 of \cite{DF}) that one can always
arrange for a birational map to be AS considering an extension of
$f.$ If it is the case and we call $\mathcal
{X}(x)=x^k+\sum_{i=0}^{k-1} c_i\,x^i$ the characteristic polynomial
of $A:=(\tilde F^*),$ then since $\mathcal {X}(A)=0$ and $d_i$ is
the $(1,1)$ term of $A^i$ we get that
$$d_{k}=-(c_0+c_1d_1+c_2d_2+\cdots + c_{k-1}d_{k-1}),$$
i. e., the sequence $d_n$ satisfies a homogeneous linear recurrence
with constant coefficients. The dynamical degree is then the largest real root of $\mathcal {X}(x).$




The following result is useful in our work. It is a direct
consequence of Theorem 0.2 of \cite{DF}. Given a birational map $F$
of $\pr,$ let $\tilde{F}$ be its regularized map so that the induced
map $\tilde{F}^{*}:\mathcal{P}ic(X) \to \mathcal{P}ic(X)$ satisfies
$(\tilde{F}^n)^{*} = (\tilde{F}^{*})^n.$ Then

\begin{theorem}\label{theo-diller}
    (See \cite{DF}) Let $F:\pr \to \pr$ be a birational map, $\tilde{F}$ be its regularized map and let $d_n = deg(F^n).$  Then up to bimeromorphic conjugacy, exactly one of
    the following holds:
    \begin{itemize}
        \item The sequence $d_n$ grows quadratically, $\tilde{F}$ is an automorphism and $f$ preserves an elliptic fibration.
        \item The sequence $d_n$ grows linearly and $f$ preserves a rational fibration. In this case $\tilde{F}$ cannot be conjugated to an automorphism.
        \item The sequence $d_n$ is bounded, $\tilde{F}$ is an automorphism and $f$ preserves two generically transverse rational fibrations.
        \item The sequence $d_n$ grows exponentially.
    \end{itemize}
    In the first three cases $\delta(F) = 1$ while in the last one $\delta(F) > 1.$ Furthermore in the first and second, the invariant fibrations are unique.
\end{theorem}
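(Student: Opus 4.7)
The plan is to reduce the statement to linear algebra on the Picard group of a suitable surface. First I would pass to an algebraically stable model: by Theorem 0.1 of \cite{DF} we may assume, after a finite sequence of blow-ups $\pi: X \to \pr$, that $\tilde{F}: X \to X$ is AS, so that $(\tilde{F}^n)^* = (\tilde{F}^*)^n$ on $\mathcal{P}ic(X)$. Writing the matrix of $\tilde{F}^*$ in the basis $\{\hat{L}, E_1, \ldots, E_k\}$, the degree sequence appears as $d_n = \langle (\tilde{F}^*)^n\,\hat{L},\,\hat{L}\rangle$, so everything is controlled by the spectrum and Jordan form of the single operator $\tilde{F}^*$. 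The key geometric ingredient is that $\tilde{F}^*$ preserves the intersection form on $\mathcal{P}ic(X)_{\mathbb{R}}$ (which by the Hodge index theorem has signature $(1,\rho-1)$) and preserves the nef cone.

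The spectral dichotomy then follows from a Perron--Frobenius-type argument applied to the invariant cone: there is a nef class $\theta$ with $\tilde{F}^*\theta = \delta(F)\,\theta$ and $\delta(F)$ equal to the spectral radius. If $\delta(F) > 1$, iteration yields $d_n \sim \delta(F)^n$, giving the exponential case. If $\delta(F) = 1$, the Hodge index theorem forces $\theta^2 \ge 0$ and severely restricts the possible Jordan structure at the eigenvalue $1$: the largest Jordan block has size $1$, $2$ or $3$, producing bounded, linear, or quadratic growth of $d_n$ respectively. This three-way split is precisely the split into the first three cases of the statement.

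The hard part is upgrading this linear-algebraic picture to the geometric conclusions about invariant fibrations and automorphy. Given an invariant nef class $\theta$ with $\theta^2 = 0$, one constructs the invariant fibration $X \to \pru$ by realising a suitable multiple of $\theta$ as an effective divisor whose linear system defines a pencil; the Jordan block size then dictates the type (elliptic in the quadratic case, rational in the linear and bounded cases) via the intersection with the canonical class and the Kodaira classification of fibres. In the quadratic case a dimension count forces $\tilde{F}$ to be an automorphism (it cannot contract a fibre without decreasing $\theta^2$); in the linear case no further blow-ups can eliminate the remaining indeterminacy, so $\tilde{F}$ is not conjugate to an automorphism; in the bounded case the existence of two independent invariant classes orthogonal under the intersection form yields two transverse invariant rational fibrations. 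Uniqueness in the first two cases follows from the fact that a second independent invariant nef class with $\theta^2 = 0$ would enlarge the invariant subspace at eigenvalue $1$ beyond what the Jordan structure allows. The main obstacle in a self-contained proof is precisely this last geometric construction, which is why in our paper we simply quote Theorem 0.2 of \cite{DF} and focus on the combinatorics of the orbits of $\mathcal{I}(F^{-1})$ that determine which case occurs.
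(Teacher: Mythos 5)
You should first note that the paper does not prove this statement at all: it is stated as ``a direct consequence of Theorem 0.2 of \cite{DF}'' and simply imported from Diller--Favre. So there is nothing in the paper to compare your argument against line by line; the only question is whether your sketch is a sound account of the known proof. It follows the right general lines (pass to an AS model, study $\tilde F^*$ on $\mathcal{P}ic(X)$, Perron--Frobenius on the nef cone, Jordan structure at the eigenvalue $1$, then realise the invariant nef class of self-intersection zero as a fibration), but it contains one genuine error and one acknowledged black box.

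The error is the claim that $\tilde F^*$ \emph{preserves the intersection form}. That holds only when $\tilde F$ is an automorphism. For a merely algebraically stable birational map one has only the inequality $(\tilde F^*\alpha)\cdot(\tilde F^*\beta)\ge \alpha\cdot\beta$ for nef classes, together with invariance of the nef cone; $\tilde F^*$ is in general not an isometry because $\tilde F^*\tilde F_*\ne \mathrm{id}$. This is not a cosmetic slip: a unipotent isometry of a lattice of signature $(1,\rho-1)$ can only have Jordan blocks of odd size at the eigenvalue $1$ (a single size-$2$ block forces a vector in the radical, and a pair of size-$2$ blocks needs signature $(2,2)$), so under your isometry hypothesis the size-$2$ block --- hence linear growth of $d_n$ --- cannot occur. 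The exclusion of linear growth for isometries is precisely the content of the assertion ``in the linear case $\tilde F$ cannot be conjugated to an automorphism,'' so assuming the isometry throughout erases the very case you must produce, and your later justification of non-automorphy (``no further blow-ups can eliminate the indeterminacy'') is circular as stated. Beyond this, the passage from an invariant nef class with $\theta^2=0$ to an actual elliptic or rational fibration (via Riemann--Roch and the canonical class) is the technical heart of Diller--Favre and is left entirely unargued in your sketch. Since the paper itself quotes Theorem 0.2 of \cite{DF} rather than reproving it --- as you yourself concede in your last sentence --- the honest course here is to cite the result, or else to fix the isometry claim and supply the fibration construction.
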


\subsection{Lists of orbits.}
We derive our results in the non-degenerate case by using Theorem
\ref{th_BK} below, established and proved in \cite{BK1}. The proof of
that is based in the same tools explained in the above paragraph. In
order to determine the matrix of the extended map in the Picard
group, it is necessary to distinguish between different behaviors
of the iterates of the map on the indeterminacy points of its
inverse.

The theorem is written for a general family $G$ of quadratic maps of
the form $G = L \circ J.$ As we will see the maps of  family
(\ref{eq1}), when the triangle is non-degenerate, are linearly
conjugated to such a maps. Here $L$ is an invertible linear map and
$J$ is the involution in $\pr$ as follows:
\begin{equation*}
J[x_0:x_1:x_2] = [{x_1}{x_2}:{x_0}{x_2}:{x_0}{x_1}].
\end{equation*}
We find that the involution $J$ has an indeterminacy locus
$\mathcal{I} = \{\epsilon_0, \epsilon_1, \epsilon_2\}$ and a set of
exceptional curves  $\mathcal{E} = \{\Sigma_0, \Sigma_1,
\Sigma_2\}$, where $\Sigma_i = \{x_i = 0\}$ for $i = 0,1,2,$ and
$\epsilon_i = \Sigma_j \bigcap \Sigma_k$ with $\{i,j,k\} =\{0,1,2\}$
and $i\ne j\ne k\,,\,i\ne k.$ Let $\mathcal{I}(G^{-1}) := \{a_0,
a_1, a_2\},$ the elements of this set are determined by $a_i:=
G(\Sigma_i - \mathcal{I}(J))=L \,\epsilon_i$ for $i = 0,1,2;$ see
\cite{BK1}.

To follow the orbits of the points of $\mathcal{I}(G^{-1})$ we need
to understand the following definitions and construction of lists of
orbits in order to apply the result of Theorem \ref{th_BK}.

We assemble the orbit of a point $p \in \pr$ under the map $G$ as
follows. For a point $p \in \mathcal{E}(G) \cup \mathcal{I}(G)$  we
say that the orbit $\mathcal{O}(p) = \{p\}$. Now consider that there
exits a $p \in \pr$ such that its $n^{th}-$ iterate belongs to
$\mathcal{E}(G) \cup \mathcal{I}(G)$ for some $n$, whereas all the
other $n-1$ iterates of $p$ under $G$ are never in $\mathcal{E}(G)
\cup \mathcal{I}(G)$. This is to say that for some $n$ the orbit of
$p$ reaches an exceptional curve of $G$ or an indeterminacy point of
$G.$ We thus define the orbit of $p$ as $\mathcal{O}(p) = \{p, G(p),
..., G^{n}(p)\}$ and we call it a  \textit{singular orbit}. If for
some $p \in \pr$ in turns out that $p$ and all of its iterates under
$G$ are never in $\mathcal{E}(G) \cup \mathcal{I}(G)$ for all $n,$
we set as $\mathcal{O}(p) = \{p, G(p), G^{2}(p) ...\}$ and
$\mathcal{O}(p)$ is \textit{non singular orbit}. We now make another
characterization of these orbits. Consider that a singular orbit
reaches an indeterminacy point of $G$, this is to say that $G^{n}(p)
\in \mathcal{I}(G)$ but its not in $\mathcal{E}(G).$  We call such
orbits as \textit{singular elementary orbits} and we  refer them as
SE-orbits. To apply Theorem \ref{th_BK} we need to organize our SE
orbits into lists in the following way.

Two orbits $\mathcal{O}_{1} = \{a_1,...,\epsilon_{j_1}\}$ and
$\mathcal{O}_{2} = \{a_2,...,\epsilon_{j_2}\}$ are in the same list
if either $j_1=2$ or $j_2=1,$ that is, if the ending index of one
orbit is the same as the beginning index of the other. We have the
following possibilities:

\begin{itemize}
\item \textit{Case 1: One SE-orbit,} $\mathcal{O}_{i} =
\{a_i,...,\epsilon_{\tau(i)}\}.$ Then we have the list
$\mathcal{L}=\{\mathcal{O}_{i} = \{a_i,...,\epsilon_{\tau(i)}\}\}.$
If $\tau(i)=i$ we say that $\mathcal{L}$ is a closed list. Otherwise
it is an open list.

\item \textit{Case 2: Two SE-orbits,} $\mathcal{O}_{i} =
\{a_i,...,\epsilon_{\tau(i)}\}\}$ and $\mathcal{O}_{j} =
\{a_i,...,\epsilon_{\tau(j)}\}\}.$
 In this case we can have either two closed lists,
\begin{center}$\mathcal{L}_{1} = \{\mathcal{O}_{i} = \{a_i,...,\epsilon_i\}\}$
\quad and \quad $\mathcal{L}_{2} = \{\mathcal{O}_{j} =
\{a_j,...,\epsilon_j\}\} \quad \text{with}\quad i\ne j$
\end{center}
or one open and one closed list

\begin{center}
$\mathcal{L}_1 = \{\mathcal{O}_{i} = \{a_i,...,\epsilon_i\}\}$ \quad
and \quad $\mathcal{L}_{2} = \{\mathcal{O}_{j} =
\{a_j,...,\epsilon_k\}\} \quad\text{with}\quad i\ne j\,,\,j\ne
k\,,\,k\ne i$
\end{center}
or a single list

\begin{center}
$\mathcal{L} = \{\mathcal{O}_{i} = \{a_i,...,\epsilon_j\},
\mathcal{O}_{j} =
\{a_j,...,\epsilon_{\tau(j)}\}\}\quad\text{with}\quad i\ne j$
\end{center}
which is closed if $\tau(j)=i$ and an open list otherwise.

Notice that we cannot have two open lists because there are at most
three SE-orbits.

\item \textit{Case 3: Three SE orbits}: In this case we can have either \textit{three closed lists}
\begin{center}
$\mathcal{L}_{1} = \{\mathcal{O}_{0} = \{a_0,...,\epsilon_0\}\}$
\quad and \quad $\mathcal{L}_{2} = \{\mathcal{O}_{1} =
\{a_1,...,\epsilon_1\}\}$ \quad and \quad $\mathcal{L}_{3} =
\{\mathcal{O}_{2} = \{a_2,...,\epsilon_2\}\},$
\end{center}
or \textit{two closed lists}
\begin{center}
$\mathcal{L}_{1} = \{\mathcal{O}_{i} = \{a_i,...,\epsilon_j\},
\mathcal{O}_{j} = \{a_j,...,\epsilon_i\}\}$  and $\mathcal{L}_{2} =
\{\mathcal{O}_{k} = \{a_k,...,\epsilon_k\}\}\quad \text{with} \quad
i\ne k \ne j\quad\text{and}\quad i\ne j$
\end{center}
or \textit{one closed list}
\begin{center}
$\mathcal{L} = \{\mathcal{O}_{0} = \{a_0,...,\epsilon_1\},
\mathcal{O}_{1} = \{a_1,...,\epsilon_2\}, \mathcal{O}_{2} =
\{a_2,...,\epsilon_0\}\}.$
\end{center}

\end{itemize}

We now define two polynomials $\mathcal{T}_{\mathcal{L}}$ and
$\mathcal{S}_{\mathcal{L}}$ which we will use to state theorem
\ref{th_BK}. Let $n_i$ denote the sum of the number of elements of
an orbit $\mathcal{O}_{i}$ and let $\mathcal{N}_{\mathcal{L}} = n_u
+ ...+ n_{u+\mu}$ denote the sum of the numbers of elements of each
list $\left|\mathcal{L}\right|.$ If $\mathcal{L}$ is closed then
$\mathcal{T}_{\mathcal{L}} = x^{\mathcal{N}_{\mathcal{L}}}-1$ and if
$\mathcal{L}$ is open then $\mathcal{T}_{\mathcal{L}} =
x^{\mathcal{N}_{\mathcal{L}}}.$ Now we define
$\mathcal{S}_{\mathcal{L}}$ for different lists as follows:
\begin{equation*}
{\mathcal{S}_{\mathcal{L}}}(x) = \left\{ {\begin{array}{*{20}{c}}

{1} & {} & \mbox{if}\;{\left| \mathcal{L} \right| = \left\{ {{n_1}} \right\},}  \\
   {{x^{{n_1}}} + {x^{{n_2}}} + 2} & {} & \mbox{if}\; \mathcal{L}\; \mbox{is closed and}\; {\left| \mathcal{L} \right| = \left\{ {{n_1},{n_2}} \right\},}  \\
   {{x^{{n_1}}} + {x^{{n_2}}} + 1} & {} & \mbox{if}\;\mathcal{L}\; \mbox{is open and}\; {\left| \mathcal{L} \right| = \left\{ {{n_1},{n_2}} \right\},}  \\
   {\sum\limits_{i = 1}^3 {\left[ {{x^{{\mathcal{N}_{\mathcal{L}}} - {n_i}}} + {x^{{n_i}}}} \right]  + 3}} & {} & \mbox{if}\; \mathcal{L}\; \mbox{is closed and}\; {\left| \mathcal{L} \right| = \left\{ {{n_1},{n_2},{n_3}} \right\},}  \\
   {\sum\limits_{i = 1}^3 {{x^{{\mathcal{N}_{\mathcal{L}}} - {n_i}}}}  + \sum\limits_{i \ne 2} {{x^{{n_i}}}}  + 1} & {} & \mbox{if}\;\mathcal{L}\; \mbox{is open and}\;{\left|\mathcal{L} \right| = \left\{ {{n_1},{n_2},{n_3}} \right\}.}  \\
 \end{array} } \right.
\end{equation*}
\begin{theorem}\label{th_BK}
(\cite{BK2})\,\,If $G = L \circ J$, then the dynamical degree
$\delta(G)$ is the largest real zero of the polynomial
$$\mathcal{X}(x) = (x - 2)\prod\limits_{\mathcal{L} \in {\mathcal{L}^c} \cup {\mathcal{L}^o}} {{\mathcal{T}_{\mathcal{L}}}(x) + (x - 1)\sum\limits_{\mathcal{L} \in {\mathcal{L}^c} \cup {\mathcal{L}^o}} {{S_L}(x)\prod\limits_{\mathcal{L}' \ne \mathcal{L}} {{\mathcal{T}_{\mathcal{L}'}}(x).} } }$$
Here $\mathcal{L}$ runs over all the orbit lists.
\end{theorem}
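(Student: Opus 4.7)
The statement is a combinatorial encoding of the action of the regularized map on the Picard group, so I would organize the proof around three ingredients: regularization by blow-ups, computation of $\tilde G^*$ on a canonical basis, and extraction of $\mathcal X(x)$ from the block structure dictated by the list data.

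First, I would blow up $\pr$ at every point appearing in a singular elementary orbit $\{a_i, G(a_i), \ldots, G^{n_i}(a_i) = \epsilon_{\tau(i)}\}$, obtaining a complex manifold $X$ and a lift $\tilde G : X \to X$. The algebraic stability of $\tilde G$ then follows from a direct application of the Fornaess--Sibony criterion (\ref{condicio}): after blow-up, the strict transform of each exceptional curve $\Sigma_i$ carrying an SE-orbit no longer collides with $\mathcal I(G)$ but is mapped into $E_{a_i}$ and thereafter walks along the chain $E_{G(a_i)}, E_{G^2(a_i)}, \ldots, E_{\epsilon_{\tau(i)}}$. Non-singular orbits contribute nothing, since they never approach the indeterminacy loci, so the final answer depends only on the orbit-list data.

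Second, I would compute $\tilde G^*$ on the basis $\{\hat L\} \cup \{E_p : p \text{ blown up}\}$ of $\mathcal{P}ic(X)$, using three standard facts: a generic line has degree-$2$ preimage under $G$ passing through each $\epsilon_j \in \mathcal I(G)=\mathcal I(J)$; $G$ is a local diffeomorphism at each interior orbit point; and $\Sigma_i$ is contracted by $G$ to $a_i$. These yield three families of relations. A degree-$2$ relation for $\tilde G^*(\hat L)$ involving $\hat L$ and the endpoint divisors $E_{\epsilon_{\tau(i)}}$; a shift relation $\tilde G^*(E_{G^{k}(a_i)}) = E_{G^{k-1}(a_i)}$ at each interior point; and an ``opening'' relation at each $E_{a_i}$ that expresses $\tilde G^*(E_{a_i})$ as the strict-transform class $\hat\Sigma_i$, i.e.\ a combination of $\hat L$ and the $E_{\epsilon_j}$'s that were blown up. It is this opening relation that couples different orbits into a single list whenever the endpoint index $\tau(i)$ of one orbit matches the starting index of another.

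Third, ordering the basis list-by-list puts the matrix of $\tilde G^*$ into block form. Within each list the shift relations produce a cyclic (if the list is closed) or almost-cyclic (if it is open) block whose characteristic factor is exactly $\mathcal T_{\mathcal L}(x)$; the rank-one couplings through $\hat L$ and through the opening rows contribute, after cofactor expansion along the $\hat L$-row and the $E_{a_i}$-rows, the corrections $(x-1)\mathcal S_{\mathcal L}(x) \prod_{\mathcal L'\neq \mathcal L}\mathcal T_{\mathcal L'}(x)$, while the leading $(x-2)\prod_{\mathcal L}\mathcal T_{\mathcal L}(x)$ arises from the $2\hat L$ diagonal entry. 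Since $\tilde G$ is AS, condition (\ref{AS}) gives $d_n$ as the $\hat L$-coefficient of $(\tilde G^*)^n\hat L$, so $\delta(G) = \lim d_n^{1/n}$ is the spectral radius of $\tilde G^*$, equivalently the largest real root of $\mathcal X(x)$.

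\textbf{Main obstacle.} The bookkeeping behind $\mathcal S_{\mathcal L}$ is the delicate step: each configuration (one, two, or three orbits in a list, open or closed) produces a different pattern of interaction between the $\hat L$-row and the opening relations at the various $E_{a_i}$, and the six cases in the definition of $\mathcal S_{\mathcal L}$ must be verified individually. Checking that the cofactor expansion in each case collapses to the stated closed form, and that the resulting polynomial is independent of which orbit one takes as the ``head'' of each cyclic block, is the main technical labor.
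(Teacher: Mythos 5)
You should first be aware that the paper contains no proof of this statement: Theorem \ref{th_BK} is imported verbatim from Bedford and Kim (\cite{BK1,BK2}), and the authors only sketch the surrounding machinery (regularization by blow-ups, the induced action $\tilde F^*$ on $\mathcal{P}ic(X)$, algebraic stability via the Fornaess--Sibony criterion, and the identification of $d_n$ with the $(1,1)$-entry of $(\tilde F^*)^n$) in Section 3.2. Your plan reproduces exactly that skeleton, and it is the correct skeleton: blow up the points of the SE-orbits, check AS, write $\tilde G^*$ in the basis $\{\hat L\}\cup\{E_p\}$ using the degree relation on $\hat L$, the shift relations along each orbit, and the ``opening'' relation $\tilde G^*(E_{a_i})=\hat\Sigma_i$ that couples orbits into lists, then read off the characteristic polynomial from the block structure. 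So as a description of the route taken in the cited source, your proposal is accurate.

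As a proof, however, it stops precisely where the content of the theorem begins. All six closed forms for $\mathcal S_{\mathcal L}$ (and the distinction between $\mathcal T_{\mathcal L}=x^{\mathcal N_{\mathcal L}}-1$ and $x^{\mathcal N_{\mathcal L}}$) are asserted to ``collapse'' out of a cofactor expansion that you do not perform; since these formulas are the entire statement, deferring them to a ``main obstacle'' paragraph leaves the theorem unproved. Two further points are glossed over and do need an argument. First, algebraic stability of $\tilde G$ after blowing up the SE-orbit points is not automatic from the Fornaess--Sibony criterion alone: you must check that the orbit points $a_i, G(a_i),\dots,\epsilon_{\tau(i)}$ are pairwise distinct (so the blow-ups are well defined), that blowing them up creates no new indeterminacy points on the exceptional fibres, and you must say what happens to exceptional curves whose orbits are singular but \emph{not} elementary (those landing on some $\Sigma_j$ rather than on $\mathcal I(G)$); the theorem's hypotheses implicitly exclude or absorb these, but a proof must say how. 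Second, passing from ``$\delta(G)$ is the spectral radius of $\tilde G^*$'' to ``$\delta(G)$ is the largest \emph{real} root of $\mathcal X$'' uses that $\tilde G^*$ preserves an effective cone, so its spectral radius is attained at a real nonnegative eigenvalue; this Perron--Frobenius-type step should be stated. None of these is a wrong turn, but in their absence the proposal is a roadmap to the Bedford--Kim proof rather than a proof.
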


\section{Proof of the results}

$F$ is a non-degenerate when the sets $\mathcal{E}(F),\,\,\mathcal{E}(F^{-1})$ have the three
elements, each two of them intersecting on distinct points of $\mathcal{I}(F),\,\,\mathcal{I}(F^{-1})$ presented in section $2.1$. In this section we consider this to do the following study.

We consider the involution $J[x_0:x_1:x_2]$ introduced in section $2.3$,
and two invertible linear maps $M_1$ and $M_2$ of $\pr$
such that $M_1$ sends each $\Sigma_i$ to $S_i$ and
$M_2(A_i)=\epsilon_i$ for $i = 0,1,2.$ Then the mapping $M_2\circ
F\circ M_1$ is quadratic and sends each $\Sigma_i$ to $\epsilon_i.$
Therefore $M_2\circ F\circ M_1$ must be of the form
$[\lambda_0\,x_1\,x_2:\lambda_1x_0x_2:\lambda_2\,x_0\,x_1]$ with
$\lambda_i\ne 0$ for each $i=0,1,2,$ that is $M_2\circ F\circ
M_1=D\circ J$ where $D$=diag$(\lambda_0,\lambda_1,\lambda_2).$
Calling $L=M_1^{-1}\circ M_2^{-1}\circ D$ we get that $F\circ M_1=
M_2^{-1}\circ D\circ J=M_1\circ L\circ J,$ that is the mappings $F$
and $G:=L\circ J$ are linearly conjugated. Calling
$a_i:=G(\Sigma_i-\mathcal{I}(J))=L \epsilon_i$ for $i=0,1,2$ we are
going to identify each $a_i\in \mathcal{I}(G^{-1})$ with $A_i\in
\mathcal{I}(F^{-1}).$

From now on we are going to assume that $f(x,y)$ is birational (see
conditions $(a)$ in Lemma \ref{conditions}), that $F[x_0:x_1:x_2]$
has degree two and that it is not not degenerated (i. e.,
$(\alpha\gamma)_{12}\ne 0 \ne (\beta\gamma)_{12}).$ The exceptional
set of $F$ and $F^{-1}$ can be seen in the following figure:


The mapping $F$ is bijective from $\pr\setminus \{S_0,S_1,S_2\}$ to
$\pr\setminus \{T_0,T_1,T_2\}.$ The only points in $T_i$ which have
preimage by $F$ are $A_j,A_k$ with $i\notin\{j,k\}\,,\,j\ne k$ which
have as preimages $S_j$ and $S_k$ respectively.

To prove the results we use the following strategy. First we perform
the necessary blow-up's in order to have an extension of $F,$ that
is $\tilde F$ and it is AS. Then we construct the lists of the
orbits of points $A_i$ and we apply Theorem \ref{th_BK}. We now give
the proofs of Theorems $1$ to $4$. They are as follows.
\begin{enumerate}
\item \textbf{Proof of Theorem $\bold{1}$}
\begin{proof}

The conditions on the parameters imply that $F(A_0) = A_0$ with
$A_0\notin\mathcal{I}(F)$ and $F(A_1) = A_0.$ Since $A_0, A_1 \in
S_0\in \mathcal{E}(F)$, thus we find that their orbits are
$\mathcal{O}_0 = \{A_0\}$ and $\mathcal{O}_1 = \{A_1\}$ which are
singular but not elementary. Now it remains to analyze the behavior
of iterates of $A_2$. We claim that $\nexists p\in\N: {F^p}({A_2}) =
{O_1}$ and $\nexists p\in\N: {F^p}({A_2}) = {O_2}.$ It is so because
if ${F^p}({A_2}) = {O_1}$ then, since $O_1\in S_0=T_2$ it would
imply $O_1=A_0$ or $O_1=A_1,$ that is $\alpha_1=0$ or $\alpha_2=0.$
 Similarly, ${F^p}({A_2}) = {O_2}$ implies $\gamma_1=0$ or
$\gamma_2=0.$ Therefore the only possibility is that any iterate of
$A_2$ reaches $O_0.$ Thus we assume the following cases:
\begin{enumerate}

\item Assume that ${F^p}({A_2}) \ne {O_0}$ for all $p \in \N.$ Then the map $F$ is itself AS. Hence, $\delta(f) = 2.$

\item Now assume that ${F^p}({A_2}) = {O_0}$ for some $p \in \N.$ Thus we have a SE orbit of $A_2$ which is:
$\mathcal{O}_{2} = \{A_2, {F}({A_2}),..., {F^p}({A_2})= O_0\}.$
In this case we have only
one list $\mathcal{L}_{o}$ which is open. That is:
$$\mathcal{L}_{o} = \{\mathcal{O}_{2} = \{A_2, {F}({A_2}),..., {F^p}({A_2})= O_0\}\}.$$
To find the characteristic polynomial we use Theorem \ref{th_BK}. We
find that $\mathcal{N}_{\mathcal{L}_o} = p $,
$\mathcal{T}_{\mathcal{L}_o} = x^{p}$ and
$\mathcal{S}_{\mathcal{L}_o} = 1.$ Then the $\delta(F)$ is the
largest root of the characteristic polynomial
$\mathcal{Y}_p(x):={x^{p + 2}} - 2{x^{p + 1}} + {x} - 1.$

Observe that for all the values of $p \in \N$ the above polynomial
has always the largest root $\lambda > 1.$ This is because
$\mathcal{Y}_p(1) = -1 < 0$ and $\mathcal{Y}_p(2) = 1 > 0,$
therefore there always exists a root $\lambda > 1$ such that
$\mathcal{Y}_p(\lambda) = 0.$ Hence $d_n$ has exponential growth
rate.
\end{enumerate}
\end{proof}

\item \textbf{Proof of Theorem $\bold{2}$}

Assume that $\gamma_1 = 0.$ From Lemma \ref{conditions} we know that
$\alpha_1, \beta_1$ and $\gamma_2$ are non zero. We distinguish two
cases, depending on $\alpha_2.$
\begin{itemize}

\item Consider the case when $\alpha_2=0.$
Observe that $S_0 \twoheadrightarrow A_0 = O_2$ and
$S_1\twoheadrightarrow A_1 = O_1.$ Hence we blow up the points
$A_0,\,A_1$ to get the exceptional fibres $E_0,\,E_1.$ Let $X$ be
the new space and let $\tilde{F}: X \to X$ be the extended map on
$X$. In order to know $\tilde{F}$ we see $[u:v]_{E_0}\in S_0$ (resp.
$[u:v]_{E_1}\in S_0$) as $\lim_{t\to 0}[tu:1:tv]$ (resp. $\lim_{t\to
0}[tu:tv:1]$), we evaluate $F[tu:1:tv]$ (resp. $F[tu:tv:1]$) and
take limits again. We get:
$$\tilde{F}[0:x_1:x_2]=[x_2:x_1+\beta
x_2]_{E_0}\,,\,\tilde{F}[u:v]_{E_0}=[0:\alpha_1 v:u]\in T_2=S_0$$
and
$$\begin{array}{lr} \tilde{F}[x_0:x_1:-\frac{\gamma_0}{\gamma_2}x_0]=[x_0:\alpha_0 x_0+\alpha_1
x_1]_{E_1}\\ \tilde{F}[u:v]_{E_1}=[\gamma_2 u:\gamma_2(\alpha_0
u+\alpha_1 v):\beta_2 u]\in T_1\end{array}$$

 Then the map $\tilde{F}$ sends the curve $S_0\to E_0 \to
S_0$ and $S_1 \to E_1\to T_1.$ We observe that no new point of
indeterminacy is created therefore $\mathcal{{I}}(\tilde{F}) =
\left\{ {{O_0}} \right\}$ and $\mathcal{{E}}(\tilde{F}) = \left\{
{{S_2}} \right\}.$ Assume that there exists $p\in\N$ such that
$\tilde{F}^p(A_2)=O_0.$ Then we blow up
$A_2,\tilde{F}(A_2),\tilde{F^2}(A_2),\ldots,\tilde{F^p}(A_2)=O_0$
getting the exceptional fibres which we call
$E_2,E_3,\ldots,E_{p+2}.$ Set $\tilde{F_1}:X_1 \to X_1$ the extended
map. Performing the blow up at $O_0,$ since $T_0$ is sent to $O_0$
via $F^{-1},$ we have that $\tilde{F}_1^{-1}:T_0\to E_{p+2}.$ Then
 $S_2 \to E_2 \to E_3 \to \cdots\to E_{p+1}\to E_{p+2}
\to T_0.$ Hence $\tilde{F}_1:X_1\to X_1$ is an AS map and also an automorphism. Now we have two closed lists as follows
$$\mathcal{L}_{c_1} = \{\mathcal{O}_0 = \{A_0 = O_2\},\quad \mathcal{O}_2 = \{A_2,\,\tilde{F}(A_2)\,,\ldots,\tilde{F}^{p}(A_2)=O_0\}\},$$
$$\mathcal{L}_{c_2} = \{\mathcal{O}_1 = \{A_1 = O_1\}\}.$$
Then by using Theorem \ref{th_BK} we find that the characteristic
polynomial associated to $F$ is $\mathcal{X} =
(x^{p+1}+1)(x-1)^2(x+1).$ If $p$ is even then $x^{p+1}+1$ has the
factor $x+1$ and $\mathcal{X} =
(x-1)^2\,(x+1)^2\,(x^p-x^{p-1}+\cdots -x+1).$ Hence the sequence of
degrees is $d_{n} = c_0+c_1\, n+
c_2\,(-1)^n+c_3\,n\,(-1)^n+c_4\,\lambda_1^n + c_5\, \lambda_2^n
+...+c_{p+3}\,\lambda_{p}^n,$ where $c_i$ are constants and
$\lambda_1,\,\lambda_2,...,\lambda_{p}$ are the roots of polynomial
$x^p-x^{p-1} +\cdots -x+1.$ By looking at $d_n$ we see that $f$ does not grow quadratically or exponentially. As our map
$\tilde{F}_1$ is an automorphism then by using the results from Diller
and Favre in \cite{DF} we see that also cannot have linear growth. Therefore we must
have $c_1=c_3=0.$ Hence the sequence of degrees must be periodic.
This implies that $d_{2p+2+n} = d_{n}$ i.e. the sequence of degrees
is periodic with period $2p+2.$ If $p$ is odd then $d_n$ is also periodic of period $2p+2.$

If $\tilde{F}^{p}(A_2)\ne O_0$ for all $p\in\N,$ then we have two
lists which are open and closed as follows:
$$\mathcal{L}_{o} = \{ \mathcal{O}_{0} = \{A_0 = O_2\}\}\quad,\quad \mathcal{L}_{c} = \{ \mathcal{O}_{1} = \{A_1 = O_1\}\}.$$ Then $\delta(F)$ is determined by the polynomial $(x-1)^2(x+1),$ and $\delta(f)=1.$
The sequence of degrees is
$d_n=\frac{5}{4}+\frac{1}{2}\,n-\frac{1}{4}\,(-1)^n.$


\item Now consider that $\alpha_2 \ne 0.$ The
parameters $\alpha_1, \beta_1, \gamma_2$ are all non zero. Observe
that $S_0 \twoheadrightarrow A_0 = O_2.$ The orbit of $A_0$ is SE.
By blowing up $A_0$ we get the exceptional fibre $E_0$ and the new
space $X.$ The induced map $\tilde{F}: X \to X$ sends the curve $S_0
\to E_0 \to S_0.$ Observe that now
$\mathcal{{I}}(\tilde{F}) = \left\{ {{O_0},{O_1}} \right\}$ and
$\mathcal{{E}}(\tilde{F}) = \left\{{{S_1},{S_2}} \right\}.$

We see that $A_1 \ne O_1$ and the exceptional curve $S_1 \twoheadrightarrow A_1 \in S_0.$ We observe
that the collision of orbits discussed in preliminaries is happening here. The orbit of $A_1$ under $\tilde{F}$ is as follows:
\begin{equation*}
S_1 \twoheadrightarrow  A_1 \to [\gamma_2 :\beta_2]_{E_0} \to [0 :
\alpha_1(\gamma_0+\beta_2): \beta_1]\in S_0 \to \cdots
\end{equation*}
After some iterates we can write the expression of
$\tilde{F}^{2k}(A_1)$ for all $k > 0 \in \N$ as
$\tilde{F}^{2k}(A_1) = [0 :
\alpha_1(\gamma_0+\beta_2)(1+\alpha_1+\alpha_1^2+\cdots+\alpha_1^{k-1}):
\beta_1]\in S_0.$
Observe that for some value of $k \in \N$ it is possible that
$\tilde{F}^{2k}(A_1) = O_1.$ This happens when the following
\textit{condition $k$}
 is satisfied for some $k.$
\begin{equation}\label{eq4}
 \alpha_1^2(\gamma_0+\beta_2)(1+\alpha_1+\alpha_1^2+\cdots+\alpha_1^{k-1})+ \alpha_2\beta_1=0.
\end{equation}
For such $k \in \N$ the orbit of $A_1$ is SE. By blowing up the points of this orbit we get the new space $X_1$ and the induced map $\tilde{F}_1.$ Then under the action of $\tilde{F}_1$ we have
\begin{equation*}\label{seq2} S_1 \to G_0 \to G_1 \to G_2 \to \cdots
\to G_{2k-1}\to G_{2k}\to T_1.\end{equation*}
Then $\mathcal{I}(\tilde{F}_1)=\{O_0\}$ and $\mathcal{E}(\tilde{F}_1)=\{S_2\}.$

Now if the orbit of $A_1$ is SE and if $\tilde{F}_1^{p}(A_2) = O_0$
that is the orbit of $A_2$ is also SE for some $p \in \N$ then we
have three SE orbits. If \textit{condition $k$} is not satisfied
then with the extended map $\tilde{F}$ we have
$\mathcal{I}(\tilde{F})=\{O_0,O_1\}.$ Therefore we have two options:
$\tilde{F}^{p}(A_2) = O_0$ or $\tilde{F}^{p}(A_2) = O_1.$

We claim that for all $p\in\N,$ $\tilde{F}^{p}(A_2) \ne  O_1.$
Assume that $\tilde{F}^{p}(A_2)= O_1$ and assume that
$F^j(A_2)\notin S_0$ for $j=1,2,\ldots, p-1.$
$\tilde{F}^{p}(A_2)={F}^{p}(A_2)=O_1.$ Since $O_1\in S_0$ and
$A_2\notin S_0$ if $F^p(A_2)= O_1$ then $p$ would be greater than
zero and since $S_0=T_2,$ it would imply that $O_1=A_1$ or
$O_1=A_2,$ which is not the case (recall that the only points in
$T_2$ which have a preimage are $A_1$ and $A_2$).

Contrarily, if it exists some $l\in\N,l<p$ such that $F^j(A_2)\notin
S_0$ for $j=1,2,\ldots, l-1$ but $F^l(A_2)\in S_0\setminus\{O_1\}$
then $F^l(A_2)$ must be equal to $A_1$ or $A_2$ that is,
${F}^{l}(A_2)= A_1$ or ${F}^{l}(A_2)= A_2.$ The second case is not
possible as $A_2$ is a fixed point. In the first case
$\tilde{F}^{p}(A_2)=\tilde{F}^{p-l}(F^l(A_2))=\tilde{F}^{p-l}(A_1)=O_1$
which implies that $p=l+2r$ and $\tilde{F}^{2r}(A_1)=O_1.$  Hence
the orbit of $A_1$ must be SE and that condition $k$ must be
satisfied for $k=r$  which is a contradiction. It implies that the
only available possibility for $\mathcal{O}_2$ to be SE is to have
that for some $p,$ $\tilde{F}^{p}(A_2) = O_0.$ After the blow up
process we get
$$S_2 \to E_1 \to E_2 \to \cdots\to E_{p}\to E_{p+1} \to T_0.$$
The extended map $\tilde{F}_2$ is an automorphism when we have three SE orbits.

The above discussion gives us three different cases.
\begin{itemize}
    \item One $SE$ orbit: This happens when $A_0 = O_2$ with the conditions that $\tilde{F}^{2k}(A_1)\ne O_1$ and
    $\tilde{F}^p(A_2)\ne O_0$ for all $k,p\in\N.$
    Therefore we have only one list $\mathcal{L}_{o}$ which is open that is
    $\mathcal{L}_{o} = \{ \mathcal{O}_{0} = \{A_0 = O_2\}\}. $ By using theorem \ref{th_BK} we find that $\delta(F)=\delta^{*} = \frac{\sqrt{5}+1}{2}.$ which is given by the greatest root of the
    polynomial $X(x) = {x^2} -x- 1.$ Therefore it has exponential growth.
\item Two $SE$ orbits $(a)$: It is the case when $A_0 = O_2, \,\,\tilde{F}^{p}(A_2) = O_0$ and $\tilde{F}^{2k}(A_1) \ne O_1$
for all $k\in\N.$
By organizing the orbits into lists we have one closed list
$\mathcal{L}_c = \{\mathcal{O}_0 = \{A_0 = O_2\},\quad \mathcal{O}_2 = \{A_2,\,\tilde{F}(A_2)\,,...,\tilde{F}^{p}(A_2)=O_0\}\}.$
By utilizing theorem \ref{th_BK} we find that the characteristic polynomial
associated to $F$ is $\mathcal{X}_p = x^{p+1}(x^2-x-1)+x^2.$
For $p=0$ and $p=1$ the sequence of degrees satisfies
$d_{n+3} = d_{n} $ and  $d_{n+4} = d_{n+3}$ respectively which corresponds towards boundedness of $f$.

For $p = 2$ we get the polynomial $\mathcal{X}_2 = x^2(x+1)(x-1)^2.$
Looking at the first degrees we get that the sequence of degrees is
$d_n = -1+2\,n.$

For $p > 2,$ we observe that  $\mathcal{X}_p(1)=0,\,\mathcal{X'}_p(1)=2-p<0$ and $\lim_{x\to +\infty}
\mathcal{X}_p(x)=+\infty.$ Hence  $\mathcal{X}_p$ always has a root
$\lambda>1$ and the result follows.

\item Two $SE$ orbits $(b)$: When we have $A_0 = O_2, \,\,\tilde{F_1}^{2k}(A_1)
=O_1$ and $\tilde{F_1}^{p}(A_2)\ne O_0$ for all $p\in\N$
then there is one open and one closed list and
$\mathcal{X}_k = x^{2k+1}(x^2-x-1)+1.$
We observe that for all the values of $k \in \N\,,\,k\ge 1$ the
polynomial $\mathcal{X}_k$ has always a root
$\lambda > 1.$ Therefore $f$ has exponential growth.

\item Three $SE$ orbits: In this case we have $A_0 = O_2,\,\,\tilde{F}^{2k}(A_1) = O_1,\,\,\tilde{F}^{p}(A_2) = O_0,$ for
a certain $p,k\in\N.$
We have two closed lists as follows:
$$\mathcal{L}_c = \{\mathcal{O}_0 = \{A_0 = O_2\},\quad \mathcal{O}_2 = \{A_2,\,\tilde{F}(A_2)\,,...,\tilde{F}^{p}(A_2)=O_0\}\},$$
$$\mathcal{L}_c= \{\mathcal{O}_1 = \{A_1,\,\tilde{F}(A_1)_{E_0}\,,...,\tilde{F}^{2k}(A_1)_{S_0}=O_1\}\}.$$
From theorem \ref{th_BK} we can write $\mathcal{X}_{(k,p)} = x^{p+1}(x^{2k+3}-x^{2k+2}-x^{2k+1}+1)+x^{2k+3}-x^2-x+1.$
The map $\tilde{F}_2$ is an automorphism for
all the values $(k,p).$ According to Diller and Favre in \cite{DF} the
growth of degrees of iterates of an automorphism could be bounded,
quadratic or exponential but it cannot be linear as in such a case
the map is never an automorphism. For this we observe the behavior of $\mathcal{X}_{(k,p)}$ around $x=1.$ we consider it's Taylor expansion near $x = 1:$
$$ \mathcal{X}_{(k,p)}(x) = 2(2-kp+2k)(x-1)^2 +
O(\left|x-1\right|^3).$$ Thus $\mathcal{X}_{(k,p)}$ vanishes at $x=1$
 $x = 1$ and has a maximum on it $p>\frac{2(1+k)}{k}.$ Since $\lim_{x\to +\infty
\mathcal{X}_{(k,p)}(x)}=+\infty,$ always exists a root greater than
one. If $p\le \frac{2(1+k)}{k}\,,\,k\ge 1$ then
the pairs $(k,p)$ are in the
set: $A_{(k,p)} = \{((k \geq 1),0),\,((k \geq 1),1),\,((k \geq
1),2),\,(1,3),\,(2,3),\,(1,4)\}.$

For $(k,p) = (k,0),$ when $k$ is even the
sequence of degrees is $$d_{n} = c_0+c_1 n+c_2\,(-1)^n +
c_3\,(-1)^n\,n + c_4\,\lambda_1^n + c_5\, \lambda_2^n
+...+c_{2k+3}\,\lambda_{2k}^n,$$ where $c_i$ are constants and
$\lambda$'s are the roots of polynomial
$x^{2k+2}=1$ different from $\pm 1.$  If $k$ is odd then $$d_{n} =
l_0+l_1 n+l_2\,(-1)^n + l_3\,\mu_1^n+ l_4\,\mu_2^n +
\cdots+l_{2k+3}\,\mu_{2k+1}^n,$$ where $l_i$ are constants and
$\mu$'s are the roots of polynomial
$(x^{k+1}-1)\,(x^k+x^{k-1}+\cdots +x+1).$
Since $\tilde{F}_2$ is an automorphism for all ${(k,p)},$ using \cite{DF}
we have $c_1 = 0 =  c_3$ and also $l_1=0.$
This implies that $d_{2k+2+n} = d_{n},$ i. e., the sequence of degrees is periodic with period $2k+2.$
The argument for the proof of other values of $(k,p) \in A_{(k,p)}$ follows accordingly.

\end{itemize}
\end{itemize}


\item \textbf{Proof of Theorem $\bold{3}$}

From hypothesis and from Lemma \ref{conditions} we know that
$\alpha_2 \gamma_1 \ne 0$ and $\beta_2 \gamma_1 \ne 0$ therefore
$\alpha_2, \beta_2$ and $\gamma_1$ cannot be zero. There exist two
different cases to study depending on $\alpha_1.$

\begin{itemize}
\item
Consider that $\alpha_1 = 0.$ Then $A_0 =
O_1$ and $A_1 = O_2.$ We get new
space $X$ by blowing up $A_0$ and $A_1.$ $E_0,\,E_1$ are the exceptional fibres on these points
respectively. The extended map $\tilde{F}:X \to X$ sends $S_0 \to E_0
\to T_1$ and $S_1 \to E_1 \to T_2.$ Therefore the orbits of $A_0$ and $A_1$ are
SE. No new indeterminacy points have appeared therefore
$\mathcal{I}(\tilde{F}) = \left\{ {{O_0}} \right\}$ and
$\mathcal{E}(\tilde{F}) = \left\{ {{S_2}} \right\}.$ If
$\tilde{F}^p(A_2)=O_0$ for some $p\in\N$ then the orbit of $A_2$ is
SE. Let $\tilde{F}_2:X_1 \to X_1$ be the extended map on new space $X_1$ we get after blowing up the
points of orbit of $A_2.$ Then $\tilde{F}_2 $ sends
$S_2 \to E_2 \to E_3 \to \cdots\to E_{p+2}\to T_0.$ Then $\tilde{F}_2$ is an AS map and is an automorphism.

We see that we have one closed list and by utilizing Theorem \ref{th_BK} the characteristic polynomial
associated to $F$ is $\mathcal{Y}_p = x^{p+1}(x^3-x-1)+(x^3+x^2-1).$
For $p =0,$ the sequence of degrees $d_n = c_1 + c_2\,(-1)^n + c_3 (\lambda_1)^n+ c_4
(\lambda_2)^n,$ where $\lambda_1,\,\lambda_2$ are the two roots of $x^{2}+x+1=0.$ Hence 
$d_n$ satisfies $d_{n+6} = d_n,$ i.e., it is periodic of period $6.$
For $p\leq5$ the argument for the proof is similar with periods
$5,\,8,\,12,\,18$ and $30$ accordingly. When $p=6,$ the sequence of
degrees $d_n = c_1 + c_2\,n +c_3\,n^2 +c_4\,(-1)^n
+c_5\,(\lambda_1)^n + c_6 (\lambda_2)^n+ c_7 (\lambda_3)^n+ c_8
(\lambda_4)^n+ c_9 (\lambda_5)^n+ c_{10} (\lambda_6)^n.$ As
$\tilde{F}_2$ is an automorphism, from Theorem \ref{theo-diller},
the sequence of degrees
 does not grow linearly. Then either, $c_3 \ne 0$ and $d_n$ grows quadratically or $c_2 = 0 = c_3$ and $d_n$ is periodic of period $30.$

For $p > 6\,$ there always exists a
root $\lambda > 1$ of $\mathcal{Y}_p.$ Hence the
sequence of degrees grows exponentially for such values of $p$.

Now suppose that no $p$ exists such that $\tilde{F}^{p}(A_2) = O_0.$ In this case $\delta(F)$ is given
by the greatest real root of the polynomial $\mathcal{Y}(x)= x^3-x-1$.

\item Now consider that $\alpha_1 \ne 0.$ Observe that
in general $S_0\twoheadrightarrow A_0 \neq O_i$ for any $i \in \{0,1,2\}$ and
$F(A_0) = A_0.$ Thus $\mathcal{O}_0 = \{A_0\}$ is not a SE orbit. Now $S_1
\twoheadrightarrow A_1 = O_2.$ We blow up the point $A_1 = O_2.$ Therefore the orbit
of $A_1$ is SE. Let
$X$ be the new space after blowing up $A_1$ and let $E_1$ be the
exceptional fibre at this point. The induced map $\tilde{F}: X \to
X$ sends the curve $S_1 \to E_1 \to T_2 = S_0.$
Then
$\mathcal{I}(\tilde{F}) = \left\{ {{O_0\,,O_1}} \right\}$ and
$E(\tilde{F}) = \left\{ {{S_0\,,S_2}} \right\}.$

The curve $S_2 \twoheadrightarrow A_2$ and $O_1 \in S_0 = T_2.$ Note
that $F^p(A_2) \neq O_1.$ As the only points on $T_2$ which have
preimages are $A_0$ and $A_1.$ Then if the orbit of $A_2$ reaches
$O_1$ at some iterate of $F$ then $O_1$ should be equal to either
$A_0$ or $A_1.$ As $\alpha_1 \neq 0$ hence $A_0\neq O_1 \neq A_1.$
This implies that $F^p(A_2) \neq O_1$ for all $p$ but it is possible
 that $\tilde{F}^p(A_2)= O_1.$ Then in general there are two possibilities:  $\tilde{F}^{p}(A_2) = O_0$ for some $p \in \N$ or $\tilde{F}^q(A_2)= O_1$
 for some $q \in \N.$ In both cases the orbit of $A_2$ is SE.

Now if there exists some $p\in\N$ such that $\tilde{F}^p(A_2)=O_0,$ then to get
$X_1$ we blow-up all the points of the orbit of $A_2.$ The extended map $\tilde{F}_1:X_1 \to X_1$ sends
$S_2 \to E_2 \to E_3 \to \cdots\to E_{p+2}\to T_0.$ This shows that $\tilde{F}_1$ is an AS map.

Now we have one open list and the characteristic polynomial
associated to $F$ is $\mathcal{Y}_p = x^{p+1}(x^2-x-1)+x^2-1.$
Observe that for all the values of $p \in \N$ the polynomial
$\mathcal{Y}_p $ always has the largest root $\lambda > 1.$
Hence $d_n$ grows exponentially and $\delta(F)$ approaches to the
value $\delta^{*} = \frac{1+\sqrt{5}}{2}$ as $p \to \infty.$

Also if there exists some $q\in\N$ such that $\tilde{F}^q(A_2)=O_1,$ then $\tilde{F}_1$ is an AS map.
We have one closed list and the characteristic polynomial associated to $F$ is
$\mathcal{Y}_q = x^{q+1}(x^2-x-1)+x^2.$
Note that there are no mappings for $q \in \{0\,,1\}.$ As $A_2[1] \neq O_1[1]$ therefore $q = 0$ is not possible.
For $q = 1$ we have two possibilities. First when $A_2 \notin S_1,$ then the condition $\tilde{F}(A_2) = {F}(A_2) = O_1.$ But the orbit of $A_2$ can never reach $O_1$ because $O_1 \in T_2$ and $O_1 \neq A_0.$
 Now if $A_2 \in S_1$
then we have the condition $\tilde{F}(A_2) = O_1.$ In this case $\tilde{F}(A_2) \in E_1$ but it is clear that $O_1 \notin E_1$ therefore $q=1$ is not possible.

For $q = 2$ we get the polynomial $x^2(x+1)(x-1)^2$ and the sequence
of degrees is $d_n = c_2\,(-1)^n + c_3+ c_4\,n.$ Looking at the
first degrees we get $d_n=-1+2n.$ For $q
> 2,$ we observe that $\mathcal{Y}_q$ always has a root $\lambda>1$
and the result follows.

\end{itemize}

\item \textbf{Proof of Theorem $\bold{4}$}

Considering the hypothesis we know that $\gamma_1 \neq 0 \neq
\gamma_2$ therefore from lemma \ref{conditions} the parameters
$\{\alpha_0\,,\beta_0,\,\beta_1,\,\beta_2,\,\gamma_0\}$ and one of
$\{\alpha_1\,,\alpha_2\}$ at the same moment can be zero. Hence two
different cases, $\alpha_1=0$ and $\alpha_2=0$ are considered as
follows:
\begin{itemize}
\item
Consider that $\alpha_1 = 0$ and $\alpha_2 \neq 0 .$

Observe that $S_0 \twoheadrightarrow A_0 = O_1.$ Let $X$ be the new
space we get after blowing up the point $A_0$ and let $E_0$ be the
exceptional fibre at this point. The induced map $\tilde{F}: X \to
X$ sends the curves $S_0\to E_0 \to T_1.$ Hence
$\mathcal{{I}}(\tilde{F}) = \left\{ {{O_0\,,O_2}} \right\}$ and
$\mathcal{{E}}(\tilde{F}) = \left\{ {{S_1\,,S_2}} \right\}.$ Note
that $A_1$ is not indeterminate for $F$ and $A_1 \in S_0$ therefore orbit of $A_1$ collides with $A_0.$ But we observe that $\tilde{F}^q(A_1) \neq O_2$ for all $q$ as $O_2 \in S_0 = T_2.$
However it is possible that $\tilde{F}^p(A_1)= O_0$ for
some $p \in \N.$ If there exists such $p$ then we blow up the points
of the orbit of $A_1$ to get the exceptional fibres $E_i$'s. Let
$X_1$ be the new space. Then the extended map $\tilde{F}_1$ sends
$S_1 \to E_1 \to E_2 \to \cdots\to E_{p+1}\to T_0.$
Now $\mathcal{{I}}(\tilde{F}_1) = \left\{ {{O_2}} \right\}$ and
$\mathcal{{E}}(\tilde{F}_1) = \left\{ {{S_2}} \right\}.$ The
exceptional curve $S_2 \twoheadrightarrow A_2.$ But the orbit of
$A_2$ can never reach $O_2$ as $O_2 \in T_2$. Thus the orbit of
$A_2$ is not SE. Hence the map $\tilde{F}_1$ is AS.

In this case we have one closed list by using Theorem \ref{th_BK} the characteristic polynomial
associated to $F$ is $\mathcal{Z}_p = x^{p+1}(x^2-x-1)+x^2.$ The
proof for all values of $p$ is similar to the last part of theorem
\ref{theo3}.

Now assume that $\tilde{F}^p(A_1)\ne O_0$ for all $p\in\N$ such that the orbit of $A_1$ is not SE then for
some $q$ it is possible that $\tilde{F}^{q}(A_2) = O_0.$ In this case after the blow up of the orbit
of $A_2,$ in the extended space $X_1$ the induced map $\tilde{F}_1$
acts as
$S_2 \to E_1 \to E_2 \to \cdots\to E_{q+1}\to T_0.$ Thus the orbit of $A_2$ is SE and the map $\tilde{F}_1$ is AS.

We have one open list and the characteristic polynomial
associated to $F$ is $\mathcal{Z}_q = x^{q+1}(x^2-x-1)+x^2-1.$
We observe that for all the values of $q \in \N$ the polynomial
$\mathcal{Z}_q$ always the largest root $\lambda > 1$ and $d_n$ grows exponentially.

We now consider the case when for some $p,\,q \in \N$ we have
$\tilde{F}^p(A_1)= O_0$ and $\tilde{F}^q(A_2)= O_0.$


We claim that $p$ must be different from $q.$ Assume that
$\tilde{F}^k(A_1) \ne A_2$ and $\tilde{F}^j(A_2) \ne A_1$ for any $0
< k < p$ and $0 < j < q.$ Because otherwise these points can have
multiple preimages. Then $p = q$ gives the condition that
$\tilde{F}^p(A_1)= \tilde{F}^p(A_2) = O_0$ implies that $A_1 = A_2,$ as $F$ is bijective except for some particular points. But this gives a contradiction as all $A_i$'s must be different in this case. Now if there exists some $k$ or
$j$ such that $\tilde{F}^k(A_1) = A_2$ or $\tilde{F}^j(A_2) = A_1$
then there is collision of orbits.
This gives that either, $k>0$ or $j>0$ which shows that $p \ne q.$

Now consider that $q > p.$ Then the orbit of $A_2$ must collides with the
orbit of $A_1.$  Because if not then this claims that
$\tilde{F}^k(A_2) \ne A_1$ for all $0 < k < q.$ As $q > p$ then for
some $j > 0$ we can write $q = p + j.$ This gives
$\tilde{F}^{q}(A_2) = \tilde{F}^{j+p}(A_2) = O_0 =
\tilde{F}^{p}(A_1).$ As $O_0$ has unique preimage and there is no
collision this implies that no orbit enters any $T_1$ or $T_2.$
Therefore the points $\tilde{F}^{j+p}(A_2)$ and $\tilde{F}^{p}(A_1)$
have unique preimages. Then for $\tilde{F}^{j+p}(A_2) = O_0 =
\tilde{F}^{p}(A_1)$ we can find the preimages by iterating $p$ times
with $\tilde{F}^{-1}.$ This gives us
$\tilde{F}^{-p}(\tilde{F}^{j+p}(A_2)) =
\tilde{F}^{-p}(\tilde{F}^{p}(A_1))$ which implies that
$\tilde{F}^j(A_2) = A_1$ for some $0 < j < q,$ which gives
contradiction to our claim. This implies that in the case when $q >
p$ or $q < p$ we always have collision of orbits of $A_2$ with $A_1$
or $A_1$ with $A_2.$

Now for $q > p$ we must have $\tilde{F}^k(A_2) = A_1$ for some $0 <
k < q.$ Then we see that:
$$S_2 \twoheadrightarrow A_2 \to \tilde{F}(A_2) \to \cdots \to \tilde{F}^{q-p}(A_2)= A_1\to  \tilde{F}(A_1)\to \cdots \to \tilde{F}^p(A_1)= O_0.$$
This implies that the orbit of $A_2$ is no more SE. Hence two SE orbits are the orbits of $A_0$ and $A_1.$
This shows that the
characteristic polynomial is $Z_p$ in this case. Similarly, in the
second case the characteristic polynomial is $Z_q.$

Finally, if $\tilde{F}^p(A_1)\ne O_0$ and $\tilde{F}^p(A_2)\ne O_0$
for any $p \in \N$ then we have one SE orbit and the characteristic polynomial
is given by $\mathcal{Z}(x)= x^2-x-1.$ Then the dynamical degree
$\delta(f)=\delta^{*}.$

\item
Now consider $\alpha_2 = 0,$ from Lemma \ref{conditions} we have $\alpha_1\,,\gamma_1 \,,\gamma_2$
non zero. Observe that $A_0$ is not an indeterminate for $F$ and is a fixed point of
$F.$ Hence the orbit of $A_0$ is not SE.

Now $S_1 \twoheadrightarrow A_1 = O_1.$ After blowing up the point $A_1$ to get the exceptional fibre $E_1,$
the induced map $\tilde{F}: X \to X$ for new space $X$ sends the curve $S_1\to E_1 \to T_1.$
Now $\mathcal{{I}}(\tilde{F}) = \left\{ {{O_0\,,O_2}} \right\}$ and
$\mathcal{{E}}(\tilde{F}) = \left\{ {{S_0\,,S_2}} \right\}.$ The
exceptional curve $S_2 \twoheadrightarrow A_2.$ Note that $\tilde{F}^p(A_2) \ne O_2$ for all $p\in \N$ as $O_2 \in T_2.$
Then for some $p \in \N$ it is possible that $\tilde{F}^{p}(A_2) = O_0.$ In
this case the orbit of $A_2$ is SE. Let ${X_1}$ be the expanded
space we get after blowing up the orbit of $A_2$ the observe that now $\tilde{F}_1 : {X_1} \to {X_1}$ is an AS map but is not an automorphism as $S_0$ still collapses.

In this case we have one closed list and one open and the characteristic polynomial
associated to $F$ is $\mathcal{Z}_p = (x^{p+1}+1)(x-1)^2.$

Now if no such $p$ exists so that $\tilde{F}^{p}(A_2) = O_0$ then we have one open list $\delta(F)$
is given by the largest root of the polynomial $\mathcal{Z}(x)=
(x-1)^2,$ which is one.

\end{itemize}

\end{enumerate}

\section{Zero entropy cases}

From the results of Theorems \ref{theo1}, \ref{theo2}, \ref{theo3}
and \ref{theo4} we can present the maps with zero algebraic entropy.

Looking at Theorem \ref{theo1} we see that the maps which satisfy
their hypothesis have not zero entropy. From Theorem \ref{theo2} we
find very interesting maps with zero entropy. This case is studied
in detail in the paper \cite{CZ0}, giving all the prescribed
invariant fibrations and also recognizing which of such a maps are
periodic and/or integrable.

Among the mappings satisfying the hypothesis of Theorem \ref{theo3}
we have the ones with $\alpha_1=0.$ After an affine change of
coordinates this maps can be written as
$$f(x,y)=\left(y,\frac{\beta_0+y}{\gamma_0+x}\right).$$
These are the maps which we deal when we want to study a linear
fractional recurrence of order two, and they are analized in paper
\cite{BK2}. When $\alpha_1\ne 0$ the only case with dynamical degree
equals one is when $\tilde{F}^2(A_2)=O_1,$ where $\tilde{F}$ is the
mapping induced by $F$ after blowing up the point $A_1.$ To find the
maps with this condition in principle we have two possibilities,
with or without collision of orbits. When $A_2\notin S_1$ then the
condition $\tilde{F}^2(A_2)=F^2(A_2)=O_1$ never is satisfied. It is
because $O_1\in S_0=T_2$ and the only points on $T_2$ which have
preimage by $F$ are $A_0$ and $A_1$ and we see that $A_0\ne O_1\ne
A_1.$ If $A_2\in S_1,$ that is if $\beta_0=\alpha_0,$ then
$$\tilde{F}^2(A_2)=\tilde{F}(\tilde{F}([1:0:-\alpha_1]))=\tilde{F}([1:\alpha_0-\alpha_1]_{E_1})=[0:\alpha_0-\alpha_1:1].$$
Hence, condition $\tilde{F}^2(A_2)=O_1=[0:1:-\alpha_1]$ is satisfied
for $\alpha_0=\frac{\alpha_1^2-1}{\alpha_1}=\beta_0$ and we get the
uniparametric family of mappings
$$f(x,y)=\left(\omega+\alpha_1 x+y,\frac{\omega+y}{x}\right)\,\,,\,\,\omega=\frac{\alpha_1^2-1}{\alpha_1}\,\,,\,\,\alpha_1\ne
0.$$ Since the corresponding sequence of degrees grows linearly, $f$
preserves a unique rational fibration. In fact,
$$V(x,y)=\frac{(1+\alpha_1 x)(\alpha_1+\alpha_1 x+y)}{x}$$
satisfies $V(f(x,y))=\alpha_1 V(x,y).$ When $\alpha_1^n=1$ for some
$n\in\N$ then defining $W(x,y)=V(x,y)^n$ we see that
$W(f(x,y))=W(x,y)$, that is $f$ is integrable. We observe that we
also know that these maps never are periodic maps as the degrees
grow linearly.

In a similar way, we find
$$f(x,y)=\left(\alpha_2 y,\frac{\beta_2
y}{-\alpha_2\beta_2+x+y}\right)$$ which satisfies the hypothesis of
Theorem \ref{theo4} with $\alpha_1=0$ and has the unique invariant
fibration
$$V(x,y)=\frac{(\beta_2-y)(\alpha_2\beta_2-x)}{y}$$
with the property $V(f(x,y))=-\alpha_2 V(x,y).$ As before when
$(-\alpha_2)^n=1,$ then $W(x,y)=V(x,y)^n$ is a first integral of
$f(x,y).$

Finally, if $f(x,y)$ satisfies the hypothesis of Theorem \ref{theo4}
with $\alpha_1\ne 0$ and has zero entropy, after an affine change of
coordinates can be written as
$$f(x,y)=\left(\alpha_0+\alpha_1 x,\frac{\beta_0+
y}{x+y}\right).$$ Clearly $V(x,y)=x$ is an invariant fibration
satisfying $V(f(x,y))=\alpha_0+\alpha_1 V(x,y).$ When
$\alpha_1^n=1\ne \alpha_1$ then calling $h(x):=\alpha_0+\alpha_1 x$
we have that $W(x,y)=x\,h(x)\,h^2(x)\,\ldots\,h^{n-1}(x)$ is a first
integral of $f(x,y).$ Also when $\alpha_1=1$ and $\alpha_0=0$ the
maps are integrable.

\end{document}